\newtheorem{theorem}{Theorem}
\newtheorem{lemma}[theorem]{Lemma}
\newtheorem{claim}[theorem]{Claim}
\newtheorem{corollary}[theorem]{Corollary}
\newtheorem{conjecture}{Conjecture}
\newtheorem{problem}{Problem}
\DeclareMathOperator{\sat}{sat*}
\newcommand{\floor}[1]{\left\lfloor #1 \right\rfloor}
\newcommand{\ceil}[1]{\left\lceil #1 \right\rceil}
\newcommand{\A}{\mathcal{A}}
\newcommand{\B}{\mathcal{B}}
\newcommand{\F}{\mathcal{F}}
\newcommand{\C}{\mathcal{C}}
\newcommand{\I}{\mathcal{I}}
\newcommand{\Q}{\mathcal{Q}}
\newcommand{\T}{\mathcal{T}}
\renewcommand{\S}{\mathcal{S}}
\newcommand{\R}{\mathcal{R}}
\newcommand{\D}{\mathcal{D}}
\newcommand{\X}{\mathcal{X}}
\title{Exact antichain saturation numbers via a generalisation of a result of Lehman-Ron}
\author{Paul Bastide\footnote{ENS Rennes, \href{mailto:paul.bastide@ens-rennes.fr}{paul.bastide@ens-rennes.fr}.} \quad Carla Groenland\footnote{Utrecht University, \href{mailto:c.e.groenland@uu.nl}{c.e.groenland@uu.nl}. This project has received funding from the European Union’s Horizon 2020 research and innovation programme under the ERC grant CRACKNP (number 853234) and the Marie Skłodowska-Curie grant GRAPHCOSY (number 101063180). Views and opinions expressed are however those of the author(s) only.}\quad Hugo Jacob\footnote{ENS Paris-Saclay, \href{mailto:hjacob@ens-paris-saclay.fr}{hjacob@ens-paris-saclay.fr}.}\quad Tom Johnston\footnote{University of Bristol  and Heilbronn Institute for Mathematical Research, \href{mailto:tom.johnston@bristol.ac.uk}{tom.johnston@bristol.ac.uk}.}}
\date{\today}
\begin{document}

\maketitle
\begin{abstract}
    For given positive integers $k$ and $n$, a family $\mathcal{F}$ of subsets of $\{1,\dots,n\}$ is $k$-antichain saturated if it does not contain an antichain of size $k$, but adding any set to $\mathcal{F}$ creates an antichain of size $k$.
    We use sat$^*(n, k)$ to denote the smallest size of such a family. For all $k$ and sufficiently large $n$, we determine the exact value of sat$^*(n, k)$. Our result implies that sat$^*(n, k)=n(k-1)-\Theta(k\log k)$, which confirms several conjectures on antichain saturation. Previously, exact values for sat$^*(n,k)$ were only known for $k$ up to $6$.

    We also prove a strengthening of a result of Lehman-Ron which may be of independent interest. We show that given $m$ disjoint chains in the Boolean lattice, we can create $m$ disjoint skipless chains that cover the same elements (where we call a chain skipless if any two consecutive elements differ in size by exactly one).
\end{abstract}
\paragraph{Keywords} 
\section{Introduction}
Many powerful results have been proved over the years concerning the structure of chains and antichains in the Boolean lattice, e.g. \cite{Griggs82,Kleitman75,Saks79,Sperner28,Sudakov22}. 
For example, it is well-known that the Boolean lattice admits a symmetric chain decomposition \cite{aigner,GK76}, and in fact these chains may be taken to be \textit{skipless} (or \textit{saturated}): every chain $C_1\subsetneq \dots\subsetneq C_r\subseteq [n]=\{1,\dots,n\}$  has the property that $|C_{i+1}|=|C_{i}|+1$ for all $i\in [r-1]$. Skipless chains have also been studied in other contexts such as in \cite{Bajnok96,Duffus19,Logan02}.

Given sets $X_1, \dots, X_m$ from layer $r$ and sets $Y_1, \dots, Y_m$ from layer $s$ such that $X_i \subseteq Y_i$, it need not be possible to find disjoint skipless chains $C^1, \dots, C^m$ linking $X_1$ to $Y_1$, $X_2$ to $Y_2$ etc. However, it was shown by Lehman and Ron \cite{lehman2001disjoint} in 2001 that there always exist $m$ disjoint skipless chains that \emph{cover} the sets $X_1, \dots, X_m$ and  $Y_1, \dots, Y_m$.
\begin{theorem}[Lehman-Ron \cite{lehman2001disjoint}]
Let integers $1\leq s<r\leq n$ and subsets $X_1,\dots,X_m,Y_1,\dots,Y_m\subseteq [n]$ be given with $|X_i|=s,$ $|Y_i|=r$ and $X_i\subseteq Y_i$ for all $i\in [m]$. Then there exist $m$ disjoint skipless chains that cover $\{X_1,\dots,X_m,Y_1,\dots,Y_m\}$.
\end{theorem}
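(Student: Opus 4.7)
My plan is to induct on the layer gap $r-s$. The base case $r-s=1$ is immediate: the pairs $(X_i, Y_i)$ are themselves skipless two-element chains and are pairwise disjoint by hypothesis. For the inductive step with $r-s\geq 2$, I would peel off the top layer via an auxiliary \emph{Bridging Lemma}: there exist distinct sets $Z_1,\dots,Z_m\in\binom{[n]}{r-1}$ and a bijection $\sigma\colon[m]\to[m]$ such that $X_i\subseteq Z_i\subseteq Y_{\sigma(i)}$ for every $i$. Granting this, the pairs $(X_i, Z_i)$ form a valid instance of the theorem with the smaller layer gap $r-1-s$, so the inductive hypothesis yields $m$ disjoint skipless chains covering $\{X_1,\dots,X_m, Z_1,\dots,Z_m\}$; prepending $Y_{\sigma(i)}$ on top of the chain ending at $Z_i$ then produces $m$ disjoint skipless chains covering all the original sets. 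Skiplessness is preserved because $|Y_{\sigma(i)}|=|Z_i|+1$, and disjointness holds because the $Y_j$'s are pairwise distinct and strictly above all previous layers.

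The task therefore reduces to the Bridging Lemma, which I would phrase as a bipartite matching problem. The simplest setup fixes $\sigma=\mathrm{id}$, in which case the candidate set for index $i$ is $\mathcal{N}_i=\{Y_i\setminus\{e\}:e\in Y_i\setminus X_i\}$, of cardinality $r-s\geq 2$, and we seek a system of distinct representatives. By Hall's theorem such a system exists provided that for every $T\subseteq[m]$,
\[
\Bigl|\bigcup_{i\in T}\mathcal{N}_i\Bigr|\geq |T|.
\]
If enforcing $\sigma=\mathrm{id}$ proves too rigid, I would enlarge the candidate sets to all layer-$(r-1)$ sets sandwiched between some $X_i$ and some $Y_j$ and subsequently extract both the matching $i\mapsto Z_i$ and the bijection $\sigma$ via a second Hall-type argument.

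The crux is verifying Hall's condition, and this is where I expect the main difficulty to lie. The intuition is that a violation would force many pairs $(X_i, Y_i)$ to funnel through a small pool of common ``bridges'' at layer $r-1$; but each such bridge $Z$, together with two distinguished elements (one to remove to reach the $X$-side, one to add to reach the $Y$-side), pins down a ``diamond'' in the Boolean lattice that tightly constrains the corresponding pair $(X_i, Y_i)$, so a small pool forces many $(X_i, Y_i)$ to collide in a way incompatible with the distinctness of the input chains. Making this quantitative should draw on normalized matching or LYM-type inequalities applied to the relevant layer-$(r-1)$ shadows. If the direct Hall route becomes unwieldy, I would fall back on a max-flow formulation: build a layered digraph on layers $s,\dots,r$ of the Boolean lattice with unit vertex capacities and consecutive-layer edges, and use max-flow/min-cut to reduce the theorem to a clean antichain-separator statement for which Boolean-lattice structure should again provide the desired lower bound of $m$.
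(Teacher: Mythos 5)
Your outer induction on $r-s$ is fine, but it reduces everything to your Bridging Lemma, and that lemma is where all of the content of Lehman--Ron lives: it is essentially equivalent to the theorem (it follows from the theorem by taking the layer-$(r-1)$ elements of the $m$ disjoint skipless chains, and it implies the theorem via your routine induction), and you do not prove it. The simplest version you propose, with $\sigma=\mathrm{id}$ and an SDR among $\mathcal{N}_i=\{Y_i\setminus\{e\}:e\in Y_i\setminus X_i\}$, can genuinely fail: with $s=1$, $r=3$, take the five pairs $(\{1\},\{1,2,3\})$, $(\{1\},\{1,2,4\})$, $(\{1\},\{1,3,4\})$, $(\{2\},\{1,2,3\})$, $(\{3\},\{1,2,3\})$ (repetitions among the $X_i$ and $Y_i$ are permitted by the statement); the union of the candidate sets is $\{\{1,2\},\{1,3\},\{1,4\},\{2,3\}\}$, of size $4<5$, so Hall's condition fails even though the theorem, and the relaxed Bridging Lemma with a non-identity $\sigma$, do hold for this instance. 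So you are forced into the relaxed formulation, and there two separate Hall-type arguments (matching the $X_i$ to middle-layer sets, and middle-layer sets to the $Y_j$) are not enough, because the same $Z_i$ must simultaneously lie above $X_i$ and below $Y_{\sigma(i)}$: what is needed is $m$ vertex-disjoint configurations $X_i\subseteq Z_i\subseteq Y_{\sigma(i)}$, i.e.\ a Menger/min-cut statement, and the crux is to rule out a cut of size $m-1$.

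Your ``diamond'' intuition and the appeal to normalized matching or LYM do not deliver this: the required bound concerns an arbitrary family of $m$ nested pairs, not full layers, and it is exactly the hard step. The paper (following Lehman--Ron) spends essentially all of its effort here: \cref{lem:size_Q} shows, via a delicate double count of in- and out-degrees in a modified Hasse diagram (\cref{claim:in_neighbour,claim:out_neighbour,claim:sum_neighbour}), that the set of layer-$(r-1)$ sets lying between some $X$ and some $Y$ has size at least $m$; \cref{lem:complete_chains} then combines this with Menger's theorem and an induction on $m$ to handle a two-layer gap, and \cref{lem:multi_complete_chains} performs the layer-peeling induction that roughly corresponds to your outer induction. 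Your max-flow fallback is the right framework (it is Menger in disguise), but the ``clean antichain-separator statement'' you would reduce to is precisely the unproved counting lemma, so the essential argument is missing. A minor, fixable point: after applying the inductive hypothesis to the pairs $(X_i,Z_i)$ you should also argue that each of the $m$ chains contains exactly one $Z_i$ and may be truncated at layer $r-1$, so that prepending $Y_{\sigma(i)}$ is well defined and preserves disjointness.
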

It is natural to ask if a stronger statement holds. For example, what happens if we allow the sets to come from different layers, or ask that the chains go via some elements from layers between layer $r$ and layer $s$? Is it possible to cover any $m$ disjoint chains with $m$ disjoint skipless chains, or can we force the use of an additional chain? We show that $m$ chains always suffice. 
\begin{theorem}
    \label{thm:gener}
    Suppose that $\F\subseteq 2^{[n]}$ admits a chain decomposition into $m$ chains. Then there exist disjoint skipless chains $C^1,\dots,C^m$ such that $\F\subseteq \bigcup_{i=1}^m C^i$.
\end{theorem}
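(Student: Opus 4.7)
My plan is to proceed by induction on the span of $\F$, i.e., on $r-s$ where $s=\min_{A\in\F}|A|$ and $r=\max_{A\in\F}|A|$. The base case $r=s$ is immediate: $\F$ is an antichain of size $m$, and each element is a singleton skipless chain.

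For the inductive step, the idea is to lift the bottom layer of $\F$ one level up. Given the decomposition $\tilde C^1,\dots,\tilde C^m$, for each chain $\tilde C^i$ whose minimum $X^i$ is at level $s$, I aim to choose a set $Z^i$ at level $s+1$ with $X^i\subsetneq Z^i$, additionally requiring $Z^i\subseteq Y^i$ if $\tilde C^i$ has a successor $Y^i$ of $X^i$ (while $Z^i$ may be any $X^i\cup\{x\}$ when $\tilde C^i=\{X^i\}$ is a singleton). If the $Z^i$ are selected so that the modified family $\F'=(\F\setminus\F_s)\cup\{Z^1,Z^2,\dots\}$ admits a chain decomposition into at most $m$ chains (namely each $\tilde C^i$ with $X^i$ replaced by $Z^i$), then $\F'$ has span $r-s-1$ and the inductive hypothesis supplies $m$ disjoint skipless chains covering it. Since each $Z^i$ sits at the bottom level $s+1$ of $\F'$, there is at most one $Z^i$ on each such skipless chain; truncating the chain to start at that $Z^i$ and prepending $X^i$ then produces a skipless chain covering $X^i$, and together these give the desired cover of $\F$.

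The main step, and the main obstacle, is selecting the $Z^i$ compatibly. With candidate sets $\{X^i\cup\{x\}:x\in N^i\}$ (where $N^i=Y^i\setminus X^i$ for non-singleton chains, and $N^i=[n]\setminus X^i$ for singletons), this is a system-of-distinct-representatives problem, further complicated by the need to avoid \emph{splitting} existing chains in $\F'$ when a chosen $Z^i$ coincides with a non-bottom element of some other $\tilde C^j$ at level $s+1$. Establishing a valid SDR hinges on a Hall-type condition, verified by a counting argument using the shadow structure of the Boolean lattice, very much in the spirit of the original Lehman-Ron proof. The additional difficulty over Lehman-Ron is that each $\tilde C^i$ continues above level $s+1$, so the choice of $Z^i$ must be consistent with the rest of $\tilde C^i$; collisions with existing level-$(s+1)$ elements of $\F$ (possible with non-optimal chain decompositions) are handled by merging the conflicting chains, and any resulting shortfall from $m$ is filled with dummy (empty) chains.
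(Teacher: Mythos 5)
There is a genuine gap, and it sits exactly at the step you defer to ``a Hall-type condition \dots verified by a counting argument'': the SDR you need does not always exist, because you insist that each lifted element $Z^i$ lie below the \emph{same} chain's successor $Y^i$. Preserving this pairing, even one level at a time, is precisely what fails in general and is why Lehman--Ron-type statements require global re-routing. Concrete counterexample: take $n=4$, $m=3$ and the disjoint chains $\tilde C^1=\{\{1\},\{1,2,3\}\}$, $\tilde C^2=\{\{2\},\{1,2\},\{1,2,4\}\}$, $\tilde C^3=\{\{3\},\{1,3\},\{1,3,4\}\}$. All three minima are at level $s=1$. Your constraints force $Z^2=\{1,2\}$ and $Z^3=\{1,3\}$, while the candidates for $Z^1$ are exactly $\{1,2\}$ and $\{1,3\}$, so no system of distinct representatives exists. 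Your fallback of merging the colliding chains also fails: after the collision the two chains contain $\{1,2,3\}$ and $\{1,2,4\}$ (resp.\ $\{1,2,3\}$ and $\{1,3,4\}$), which are incomparable, so their union is not a chain, and no spare chain is freed to absorb the stranded minimum. If instead you simply recurse on $\F'=\F\setminus\F_1$ and hope to prepend afterwards, the natural skipless cover of $\F'$ (e.g.\ $\{\{1,2\},\{1,2,4\}\}$, $\{\{1,3\},\{1,3,4\}\}$, $\{\{1,2,3\}\}$) has only two bottoms at level $2$ for three level-$1$ minima, so the prepending step cannot cover all of $\F_1$. A valid cover of $\F$ does exist, e.g.\ $\{1\}\subset\{1,2\}\subset\{1,2,4\}$, $\{2\}\subset\{2,3\}\subset\{1,2,3\}$, $\{3\}\subset\{1,3\}\subset\{1,3,4\}$, but it routes $\{1,2,3\}$ down through the new set $\{2,3\}$ to the minimum of a \emph{different} original chain --- exactly the move your scheme forbids.

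So the missing idea is how to re-route across chains when lifting the bottom layer, and that is the actual content of the theorem; it cannot be outsourced to a Hall condition of the form you state, since that condition is false. The paper handles this by a different induction: it first reduces to $k$-antichain saturated families (Lemma~\ref{lem:main}), then makes chains skipless one at a time, and at the key step (Lemma~\ref{lem:multi_complete_chains}, via Lemma~\ref{lem:complete_chains}) it works on three consecutive layers, proves the Lehman--Ron-style counting bound $|\Q|\geq m$ (Lemma~\ref{lem:size_Q}) for midpoints over \emph{all} pairs $(S,R)$ rather than matched pairs, and converts this into $m$ vertex-disjoint paths using Menger's theorem, which is what permits the cross-chain re-routing your argument lacks. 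If you want to salvage your induction on the span, you would need to replace the SDR step by an argument of at least this strength (allowing $Z^i\not\subseteq Y^i$ and re-proving that the modified family still decomposes into $m$ chains), which is essentially the whole difficulty of the theorem.
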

This was already known in the special case that the union $\F$ of the chains we wish to cover is a convex set system (i.e. if $X, Y \in \F$ and $X \subseteq Z \subseteq Y$, then $Z \in \F$) \cite{Duffus19}. In this case, the chains can be taken to partition $\F$ as any additional sets must be at the ends of the chains. 

Although we believe Theorem \ref{thm:gener} to be of interest in its own right, our initial motivation came from the area of induced poset saturation where we use Theorem \ref{thm:gener} to easily settle various conjectures concerning the asymptotics of antichain saturation numbers. With more work, we are in fact able to go well beyond the conjectures and pinpoint the exact values. 

For given positive integers $k$ and $n$, a family $\F$ of subsets of $[n]$ is \textit{$k$-antichain saturated} if it
does not contain an antichain of size $k$, but for all $X\subseteq [n]$ with $X\not \in \F$, the family $\F\cup \{X\}$ does contain an antichain of size $k$. We denote the size of the smallest such family by $\sat(n, k)$.

In the literature, this is also sometimes denoted $\sat(n, \A_k)$, where $\A_k$ is the poset consisting of $k$ incomparable elements. This is called an \emph{induced} saturation number: it is the size of the smallest set system which is saturated in terms of not containing $\A_k$ as an \emph{induced} subposet. Such saturation numbers for the Boolean lattice were introduced by Gerbner, Keszegh, Lemons, Palmer, P{\'a}lv{\"o}lgyi
and Patk{\'o}s \cite{gerbner2013saturating} and have been investigated for a variety of posets, for example for the butterfly \cite{ivanbutterfly}, the diamond \cite{ivandiamond} and the chain \cite{scott}. We refer to \cite{KESZEGH2021105497} for a nice overview.

Ferrara, Kay, Kramer, Martin, Reiniger, Smith and Sullivan \cite{ferrara2017saturation} were the first to study the particular case of the antichain and made the following conjecture.
\begin{conjecture}[\cite{ferrara2017saturation}]
    \label{conj:1}
    For $k \geq 3$, $\sat(n, k) \sim (k-1)n$ as $n \to \infty$.
\end{conjecture}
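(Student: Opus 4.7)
The plan is to prove matching asymptotic upper and lower bounds of $(k-1)n$. For the upper bound, I would adapt a nested-chain construction from the prior literature (e.g.\ \cite{ferrara2017saturation}): take $k-1$ skipless chains from $\emptyset$ to $[n]$ arranged so that no antichain of size $k$ lives inside the union, but every set outside is incomparable with a transversal of the $k-1$ chain elements. A careful choice yields a saturated family of size $(k-1)n + O(k)$.

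The main work is the lower bound, which leverages Theorem~\ref{thm:gener}. Let $\F$ be any $k$-antichain saturated family. Since $\F$ has no antichain of size $k$, Dilworth's theorem partitions $\F$ into $m \le k-1$ chains. If $m < k-1$, every antichain in $\F$ has size at most $m$, so for any $X \notin \F$ the maximum antichain in $\F \cup \{X\}$ has size at most $m+1 < k$, contradicting saturation (for $n$ large enough that $\F \ne 2^{[n]}$). Thus $m = k-1$, and applying Theorem~\ref{thm:gener} to the partition produces disjoint skipless chains $C^1, \dots, C^{k-1}$ with $\F \subseteq \bigcup_i C^i$. In fact $\F = \bigcup_i C^i$: if some $c \in C^i \setminus \F$, then $\F \cup \{c\}$ contains an antichain of size $k$ whose $k-1$ elements in $\F$ must use one element per chain, so some $y \in C^i$ would be incomparable to $c$, which is impossible since $c, y \in C^i$.

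Writing $C^i$ as spanning layers $[a_i, b_i]$, we have $|\F| = (k-1)(n+1) - \sum_i a_i - \sum_i (n-b_i)$. To bound each $a_i$, suppose $a_i \ge 1$ and let $X_i$ denote the minimum element of $C^i$. For every $X' \subsetneq X_i$ with $|X'| = a_i - 1$, the same argument forces $X' \in \F$: otherwise any antichain of size $k-1$ in $\F$ witnessing saturation at $X'$ would need an element of $C^i$ incomparable with $X'$, but every element of $C^i$ contains $X_i \supsetneq X'$. These $a_i$ sets lie at layer $a_i - 1$ and, being proper subsets of $X_i$, cannot belong to $C^i$; since each of the remaining $k-2$ chains contributes at most one element per layer, $a_i \le k-2$, and symmetrically $n-b_i \le k-2$. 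Therefore $|\F| \ge (k-1)(n+1) - 2(k-1)(k-2)$, which is $(k-1)n(1-o(1))$ for fixed $k$ as $n \to \infty$, matching the upper bound.

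The crucial ingredient is the passage from a chain cover to a skipless chain cover, which is exactly Theorem~\ref{thm:gener}; everything else is a short comparability argument. To sharpen the $O(k^2)$ slack to the $\Theta(k\log k)$ value claimed in the abstract, one should iterate the bounding argument: the chains reaching layer $a_i-1$ are themselves forced from below by subsets of their own minimum elements, and the resulting recursion tightens $\sum_i a_i$ to $O(k\log k)$.
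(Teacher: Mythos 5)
Your proof is correct and follows essentially the same route as the paper: Dilworth plus Theorem~\ref{thm:gener} to obtain a skipless chain cover/decomposition of $\F$, then forcing subsets of each chain's minimal element into $\F$ to bound how far from the bottom (and, symmetrically, the top) each skipless chain can start, with the trivial $(k-1)(n+1)$ bound handling the upper side. The only difference is quantitative: you force only the immediate subsets one layer down, giving $a_i \le k-2$ and an $O(k^2)$ loss, whereas the paper forces \emph{all} subsets of the chain minimum and counts them on layer $\lfloor \ell/2\rfloor$ to get $a_i \le \ell = \Theta(\log k)$; for the statement at hand ($k$ fixed, $n\to\infty$) this makes no difference.
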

The upper bound is easy to see: for all $i\in [n]$, a $k$-antichain saturated family can contain at most $k-1$ subsets of size $i$ since two subsets of the same size are incomparable. Moreover, a $k$-antichain saturated family must always exist since we can start with the empty family and greedily add subsets until it is no longer possible to do so without creating an antichain of size $k$.

Martin, Smith and Walker \cite{martin2019improved} proved the lower bound
\[ \sat(n,k)\geq \left(1-\frac{1}{\log_2(k-1)} \right) \frac{(k-1)n}{\log_2(k-1)}\]
for $k \geq 4$ and $n$ sufficiently large. The exact values for $k = 2, 3$ and $4$ were shown to be $n + 1$, $2n$ and $3n -1$ respectively in \cite{ferrara2017saturation}, the exact values for $k=5$ and $k=6$ were recently determined to be $4n -2$ and $5n -5$ respectively by {\DJ}ankovi{\'c} and Ivan \cite{djankovic2022saturation}. They also strengthened Conjecture \ref{conj:1} as follows, and proposed two weaker conjectures implied by this conjecture.
\begin{conjecture}[\cite{djankovic2022saturation}]
    \label{conj:2}
    $\sat(n, k) = n(k -1) - O_k(1)$.
\end{conjecture}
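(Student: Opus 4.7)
The plan is to combine Theorem~\ref{thm:gener} with a direct saturation argument on the resulting chain cover. The matching upper bound $\sat(n,k)\leq(k-1)n+O_k(1)$ is supplied by an explicit construction (generalising the $k=3$ construction, which takes a maximal chain together with a second skipless chain shifted by one so that every middle layer is occupied by two sets); the content of Conjecture~\ref{conj:2} is the lower bound $\sat(n,k)\geq(k-1)n-O_k(1)$.

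Let $\F$ be any $k$-antichain saturated family. Since $\F$ contains no $k$-antichain, Dilworth's theorem decomposes $\F$ into at most $k-1$ chains, and Theorem~\ref{thm:gener} upgrades this to $k-1$ disjoint skipless chains $C^1,\dots,C^{k-1}$ covering $\F$ (padding with empty chains if needed). Write $[\alpha_i,\beta_i]$ for the range of layers spanned by $C^i$, and $Z_i^\ell$ for its element at layer $\ell$.

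The first step I would carry out is to show that in fact $C^i\subseteq\F$ for every $i$. If instead some $Z\in C^i$ lay outside $\F$, saturation would produce a $k$-antichain of the form $\{Z\}\cup A$ with $A\subseteq\F$ a $(k-1)$-antichain; since the $C^j$ are pairwise disjoint and only $k-1$ in number, Dilworth forces $A$ to contain exactly one element from each $C^j$, but the element of $A$ lying in $C^i$ would be comparable to $Z$ (they share a chain), a contradiction. Hence $|\F|=\sum_i(\beta_i-\alpha_i+1)$. Next, whenever $\alpha_i>0$, the same argument applied to each set $X_e:=Z_i^{\alpha_i}\setminus\{e\}$ for $e\in Z_i^{\alpha_i}$ (every element of $C^i$ strictly contains $X_e$, so no element of $C^i$ is incomparable to it) forces all $\alpha_i$ of these subsets into $\F$. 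They lie at layer $\alpha_i-1$, are not contained in $C^i$, and each of the $k-2$ remaining chains can supply at most one of them, so $\alpha_i\leq k-2$. Symmetrically $\beta_i\geq n-k+2$, and summing over $i$ gives
\[|\F|\geq(k-1)(n-2k+5)=(k-1)n-O(k^2),\]
which confirms Conjecture~\ref{conj:2}.

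The hard part of upgrading this to the sharper $\Theta(k\log k)$ estimate announced in the abstract lies in precisely this second step: the bounds $\alpha_i\leq k-2$ and $\beta_i\geq n-k+2$ are visibly lossy, because layers $\ell$ with $\binom{n}{\ell}<k-1$ simply cannot contain $k-1$ elements of $\F$, forcing the $k-1$ chains to bunch together near the top and bottom of the lattice. Quantifying this interaction---by a finer analysis of how the chain endpoints are distributed across the $\Theta(\log k)$ boundary layers, together with a tight accounting of which subsets at these layers must appear in $\F$---is where I would expect the real technical work to lie.
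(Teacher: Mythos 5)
Your argument is correct and follows essentially the same route as the paper's proof of Corollary \ref{cor:1}: Dilworth's theorem plus Theorem \ref{thm:gener} gives $k-1$ disjoint skipless chains which, by saturation, form a chain decomposition of $\F$, saturation again forces each chain to start near the bottom and end near the top, and skiplessness converts this into the lower bound (the upper bound being known). The only difference is quantitative: by using just the immediate shadow of each chain's minimum you get $\alpha_i\leq k-2$ and hence $\sat(n,k)\geq (k-1)(n-2k+5)=n(k-1)-O(k^2)$, which suffices for the $O_k(1)$ statement, whereas the paper forces \emph{all} subsets of a chain-minimum $X$ with $|X|>\ell$ into $\F$ and counts its $\geq k-1$ subsets of size $\floor{\ell/2}$ to get $\alpha_i\leq\ell=\Theta(\log k)$, yielding the sharper $n(k-1)-O(k\log k)$ of Corollary \ref{cor:1} that matches the known upper bound.
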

At the end of this introduction, we will show how all the conjectures mentioned above are an easy corollary of Theorem \ref{thm:gener}.
\begin{corollary}
\label{cor:1}
There exist constants $c_1,c_2>0$ such that for all $k\geq 4$ and $n$ sufficiently large,
\[
    n(k-1)-c_1 k\log k\leq \sat(n,k)\leq n(k-1)-c_2 k\log k.
\]
\end{corollary}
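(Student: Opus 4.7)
The plan is to derive the lower bound from Theorem~\ref{thm:gener} by a ``chain-counting'' argument and to give a matching explicit construction for the upper bound. Let $\F$ be any $k$-antichain saturated family. Since $\F$ contains no antichain of size $k$, Dilworth's theorem decomposes $\F$ into at most $k-1$ chains; applying Theorem~\ref{thm:gener} gives disjoint skipless chains $\tilde{C}^1,\dots,\tilde{C}^{k-1}$ with $\F\subseteq\bigcup_i\tilde{C}^i$. Let $a_i,b_i$ denote the minimum and maximum sizes of sets in $\tilde{C}^i$, so that $|\tilde{C}^i|=b_i-a_i+1$ and
\[
\sum_i|\tilde{C}^i| \;=\; (k-1)(n+1)-\sum_i a_i-\sum_i(n-b_i).
\]

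The first key observation is that the cover is in fact exact, i.e.\ $\F=\bigcup_i\tilde{C}^i$. Suppose otherwise that $X\in\tilde{C}^{i_0}\setminus\F$. By saturation, $\F\cup\{X\}$ contains a $k$-antichain, consisting of $X$ together with $k-1$ sets from $\F$ that are pairwise incomparable and each incomparable to $X$. Each chain meets any antichain in at most one element, and chain $\tilde{C}^{i_0}$ contributes nothing because all of its elements are comparable to $X$; so the remaining $k-2$ chains would need to supply $k-1$ antichain members, a contradiction. A very similar argument shows $a_i=O(\log k)$ for every $i$: if $A_i\in\tilde{C}^i$ is the bottom element and $X\subsetneq A_i$ has size $j<a_i$, then $X$ must lie in $\F$, for otherwise chain $\tilde{C}^i$ -- whose every element is a superset of $A_i\supsetneq X$ -- could not contribute to any $k$-antichain through $X$, again leaving too few chains. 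Hence all $\binom{a_i}{j}$ subsets of $A_i$ of size $j$ belong to $\F$, forcing $\binom{a_i}{j}\leq f_j\leq k-1$ where $f_j=|\F\cap\binom{[n]}{j}|\leq k-1$ (each level is itself an antichain). Choosing $j=\lfloor a_i/2\rfloor$ and using $\binom{a_i}{\lfloor a_i/2\rfloor}\geq 2^{a_i}/(a_i+1)$ yields $a_i=O(\log k)$; the symmetric argument on the top element $B_i$ gives $n-b_i=O(\log k)$. Summing over $i$, we get $\sum_i a_i+\sum_i(n-b_i)=O(k\log k)$, which combined with the cover identity gives $|\F|=(k-1)(n+1)-O(k\log k)\geq(k-1)n-c_1 k\log k$.

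For the matching upper bound, the plan is to reverse-engineer the lower bound: construct $k-1$ disjoint skipless chains whose bottoms $A_i$ have sizes $a_i=\Theta(\log k)$, aligned (for example as initial segments $A_i=[a_i]$) so that the subsets that saturation forces into $\F$ overlap maximally across chains, and symmetrically for the tops $B_i$. This yields a candidate family of size $(k-1)(n+1)-\Theta(k\log k)$. The main obstacle is the verification of saturation: for every $X$ not lying on any chain, one must exhibit a $(k-1)$-antichain in $\F$ incomparable to $X$, which constrains the choice of the endpoints $A_i,B_i$ and of the ``filler'' chain elements in between. We expect this verification to be the most technically delicate step of the upper bound.
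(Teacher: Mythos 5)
Your lower bound is correct and takes essentially the paper's route: Dilworth plus Theorem \ref{thm:gener} give $k-1$ disjoint skipless chains covering $\F$, saturation forces the cover to be exact and forces every proper subset of the bottom element of each chain into $\F$, and since each layer of $\F$ is an antichain of size at most $k-1$ this caps the size of every chain's bottom (and, by complementation, the co-size of its top) at $O(\log k)$; summing the chain lengths gives $(k-1)n-O(k\log k)$. The paper phrases the threshold via $\ell$ (the least $j$ with $\binom{j}{\floor{j/2}}\ge k-1$) and notes that the $\ge k-1$ forced subsets on layer $\floor{\ell/2}$ cannot be covered by the remaining $k-2$ chains, but this is the same counting as yours. (Like the paper, you silently assume the decomposition consists of exactly $k-1$ nonempty chains; this is fine because adding a single set raises the maximum antichain size by at most one, so a saturated family already contains an antichain of size $k-1$.)

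The upper bound, however, is a genuine gap: you give only a plan and yourself flag that the saturation check is open, and that check is the entire content of this half, since $\sat(n,k)$ is the minimum size of a \emph{saturated} family---exhibiting a small antichain-free union of $k-1$ skipless chains proves nothing until one shows that every absent set completes a $k$-antichain. Moreover, the one concrete choice you offer (bottoms $A_i=[a_i]$ given by initial segments, all of size $\Theta(\log k)$) is impossible: distinct initial segments have distinct sizes, so among $k-1$ of them some bottom has size at least $k-2$. The construction the paper relies on (known from \cite{ferrara2017saturation}, and presented in Section \ref{sec:upperbound}) has a different shape: with $\ell=\Theta(\log k)$ minimal such that $\binom{\ell}{\floor{\ell/2}}\ge k-1$, the layers $t\le\floor{\ell/2}$ consist of all subsets of $[\ell]$ of size $t$ (or a recursive refinement of this for general $k$), the top layers are the complementary picture, and the middle is filled by $k-1$ disjoint skipless chains supplied by Lehman--Ron/Theorem \ref{thm:gener}. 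Saturation of the low layers is then immediate: any absent set of size less than $\floor{\ell/2}$ contains an element outside $[\ell]$, hence is incomparable to the $k-1$ sets of $\F$ on layer $\floor{\ell/2}$ and creates a $k$-antichain with them. Your sketch contains no mechanism of this kind, so the right-hand inequality of the corollary remains unproved in your write-up.
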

In general, obtaining exact saturation numbers is a notoriously difficult problem, and for the antichain exact numbers were only known for $k$ up to $6$. 
Our main result determines the exact value of $\sat(n,k)$ for all values of $k$ and $n$ where $n$ is large enough relative to $k$ . We note that $n$ need not be excessively large compared to $k$ and it certainly suffices to assume $n \geq 6 \log k + 1$ for example. Determining the exact values is considerably more involved than just determining the asymptotics, and we require some more definitions just to state the value of the numbers.

Given a natural number $k$, let $\ell$ be the smallest integer $j$ such that $\binom{j}{\floor{j/2}} \geq k - 1$. Note that when $n< \ell$, there are no antichains of size $k$ in $2^{[n]}$ and $\mathcal{F}$ must contain every set (i.e. $\sat(n,k) = 2^n$).

Let $\C(m,t)$ denote the initial segment of layer $t$ of size $m$ when the sets are in colexicographic order. For a family of sets $\A$ from the same layer, let $\nu(\A)$ be the size of the maximum matching from $\A$ to its shadow $\partial \A$, and recursively define $c_0,c_{1}, \dots, c_{\floor{\ell/2}}$ as follows.
Let $c_{\floor{\ell/2}} =k-1$. For $0\leq t<\floor{\ell/2}$, let $c_t = \nu\left(\C(c_{t+1},t+1)\right)$.
\begin{theorem}
    \label{thm:main}
    Let $n, k \geq 4$ be integers and let $\ell$ and $c_0,\dots,c_{\floor{\ell/2}}$ be as defined above. If $n< \ell$, then $\sat(n,k)=2^n$.
    If $n\geq \ell$, then
    \[
        \sat(n,k) \geq  2 \sum_{t=0}^{\floor{\ell/2}} c_t+(k-1)(n - 1 - 2\floor{\ell/2}).
    \]
    Moreover, equality holds when $n\geq 2\ell+1$.
\end{theorem}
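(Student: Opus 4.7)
The plan is to prove the lower bound (valid for $n \geq \ell$) and then match it by a construction when $n \geq 2\ell + 1$; the case $n < \ell$ is immediate since $2^{[n]}$ contains no $k$-antichain.

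For the lower bound, let $\F$ be $k$-antichain saturated. Dilworth's theorem gives a chain decomposition of $\F$ into at most $k-1$ chains, and Theorem \ref{thm:gener} then produces disjoint skipless chains $C^1, \dots, C^{k-1}$ whose union $\F'$ contains $\F$. A key initial observation is $\F = \F'$: if $X \in \F' \setminus \F$ then $\F \cup \{X\} \subseteq \F'$ is still covered by $k-1$ chains, hence has no $k$-antichain, contradicting saturation. So $\F$ itself decomposes into $k-1$ disjoint skipless chains, and $f_t := |\F \cap \binom{[n]}{t}|$ equals the number of chains through layer $t$. A chain-extension argument, applied iteratively, then shows each chain $C^i$ must span from some $s_i \leq \lfloor \ell/2 \rfloor$ to some $r_i \geq n - \lfloor \ell/2 \rfloor$: whenever the top $r_i$ falls short, at least one superset at layer $r_i+1$ can be adjoined to $C^i$ without introducing a $k$-antichain, unless the layer above is already saturated to the full $k-1$, and one pushes the argument forward to force $r_i$ up to the claimed layer. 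Consequently $f_t = k-1$ for every middle layer $\lfloor \ell/2 \rfloor \leq t \leq n - \lfloor \ell/2 \rfloor$. For outer layers $t < \lfloor \ell/2 \rfloor$, I argue by downward induction that $f_t \geq c_t$: the chains terminating strictly between layers $t$ and $\lfloor \ell/2 \rfloor$ have bottoms forming a family $B$ at layer $t+1$, and saturation forces every immediate subset of every bottom to lie in $\F$, so $f_t \geq |\partial B|$; Kruskal-Katona yields $|\partial B| \geq |\partial \C(|B|, t+1)|$, and unwinding the recursion $c_t = \nu(\C(c_{t+1}, t+1))$ gives $f_t \geq c_t$. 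Summing the per-layer bounds produces the claimed total.

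For the matching construction when $n \geq 2\ell+1$, I build $\F_0$ as a union of $k-1$ disjoint skipless chains: place $\C(c_t, t)$ at each layer $0 \leq t \leq \lfloor \ell/2 \rfloor$, fill each middle layer with $k-1$ sets that extend these chains skiplessly, and apply the symmetric complement construction at the top. The recursion $c_t = \nu(\C(c_{t+1}, t+1))$ ensures that $c_t$ of the $c_{t+1}$ chains active at layer $t+1$ extend down to distinct subsets at layer $t$ via a matching in the Boolean lattice, while the remaining $c_{t+1} - c_t$ terminate. The total size equals the lower bound. Saturation is verified by producing, for each $X \not\in \F_0$, a $(k-1)$-antichain in $\F_0$ of sets incomparable to $X$: when $|X|$ lies in a middle layer, the $k-1$ sets of $\F_0$ at that layer suffice; when $|X|$ is outer, one combines middle-layer sets not containing $X$ with appropriately chosen sets at the symmetric outer layer.

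The delicate step is the outer-layer lower bound $f_t \geq c_t$, where one must carefully account for chains terminating at distinct layers above $t$ and translate the saturation condition into the correct Kruskal-Katona shadow inequality; the normalised matching property of the Boolean lattice then yields that the colex initial segment is extremal, aligning with the recursive definition of $c_t$. A secondary technicality is verifying saturation of the construction for outer $|X|$, where one must locate enough middle-layer sets that neither contain $X$ nor are contained in $X$ to form the required $(k-1)$-antichain; this is where the hypothesis $n \geq 2\ell+1$ is used, ensuring sufficient room away from $X$.
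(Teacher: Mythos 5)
Your high-level outline is right — the lower bound should come from the $k-1$ disjoint skipless chains supplied by Theorem~\ref{thm:gener}, $\F$ equals the union of those chains by saturation, and a matching construction is needed for the equality — but there are two substantive gaps in the details.

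First, the outer-layer lower bound argument does not go through as written. You claim $f_t \geq |\partial B|$, where $B$ is the set of chain bottoms at layer $t+1$, and then invoke the Kruskal--Katona theorem for shadows. But the recursion $c_t = \nu\bigl(\C(c_{t+1},t+1)\bigr)$ is in terms of the \emph{matching number} $\nu$, not the shadow size, and these do not match up: $B$ is only a subfamily of $\F_{t+1}$ (chains may have bottoms at many different layers, not just $t+1$, and chains continuing through layer $t+1$ to layer $t$ contribute to $\F_{t+1}$ but are not bottoms), so $|\partial B| \geq |\partial\C(|B|,t+1)|$ with $|B|$ potentially small does not chain through to $c_t$. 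The paper instead proves (Claim~\ref{cl:down_nu}) that the number of chains passing from layer $t$ down into layer $t-1$ is \emph{exactly} $\nu(\F_t)$, via a nontrivial rerouting argument on alternating paths between two matchings, and separately establishes and uses a new matching analogue of Kruskal--Katona (Lemma~\ref{lem:colex_best}) asserting $\nu(\F_t)\geq\nu(\C(|\F_t|,t))$. Both pieces are essential and neither is present in your proposal. This also affects your middle-layer argument: the naive chain-extension argument only pushes chain bottoms down to layer $\ell$ (as in Corollary~\ref{cor:1}); getting $f_t = k-1$ down to layer $\floor{\ell/2}$ requires the same matching machinery (Claim~\ref{cl:nu_for_colex}).

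Second, the upper bound construction you describe — placing $\C(c_t,t)$ at each outer layer $t \leq \floor{\ell/2}$ — is precisely the construction the paper warns against: the union $\bigcup_t \C(c_t,t)$ can already contain an antichain of size $k$. The paper's counterexample is $k=262$, where $\C(261,5)\cup\C(219,4)$ contains an antichain of size $262$. The paper's actual construction writes $k-1$ in a tailored $\floor{\ell/2}$-expansion (a variant of the cascade notation), assembles families $\A_i$ from those binomial coefficients, covers them by chains, and then invokes Theorem~\ref{thm:gener}; verifying both the saturation and the layer counts against the $c_t$ requires the structural Lemmas~\ref{lem:colex-nu} and \ref{lem:lower_expansion} on $r$-expansions, which your proposal does not develop.
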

Given the form of the bound in Theorem \ref{thm:main}, one might be tempted to suggest that the best approach is to take each layer $t \leq \floor{\ell/2}$ to be an initial segment of colex of the appropriate size, but this is not the case in general. While such an example would have the optimal size, it may already contain an antichain of size $k$. For example, one can check there is an antichain of size 262 in $\C(261, 5) \cup \C(219, 4)$, and this approach would not work for $k = 262$.

For infinitely many values of $k$, a matching upper bound to Theorem \ref{thm:main} was already known \cite{ferrara2017saturation} (see Section \ref{sec:upper_cor}) which works for all $n \geq \ell + 1$. It gives the following corollary.
\begin{corollary}
    \label{cor:nicecase}
    Let $\ell,k,n$ be integers such that $\binom{\ell}{\lfloor \ell/2\rfloor}=k-1$. If $n\leq \ell$ then $\sat(n,k)=2^n$. If $n\geq \ell+1$, then
    \[
        \sat(n,k)=2 \sum_{j=0}^{\lfloor \ell/2\rfloor} \binom{\ell}j + (k-1)(n-1-2\lfloor \ell/2\rfloor).
    \]
\end{corollary}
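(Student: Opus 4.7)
The plan is to deduce both bounds from results already stated in (or cited by) the paper, once the recursive constants $c_t$ from Theorem \ref{thm:main} are shown to take the clean closed form $c_t=\binom{\ell}{t}$ in this special case.

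The regime $n\leq \ell$ will be handled directly by Sperner's theorem: the largest antichain in $2^{[n]}$ has size $\binom{n}{\lfloor n/2\rfloor}\leq \binom{\ell}{\lfloor \ell/2\rfloor}=k-1<k$, so no antichain of size $k$ exists in $2^{[n]}$. Consequently the only $k$-antichain saturated family is $2^{[n]}$ itself and $\sat(n,k)=2^n$. For $n\geq \ell+1$ the lower bound will follow from Theorem \ref{thm:main} once I verify that $c_t=\binom{\ell}{t}$ for every $0\leq t\leq \lfloor \ell/2\rfloor$; I plan to prove this by downward induction on $t$. The base case $t=\lfloor \ell/2\rfloor$ is the hypothesis $c_{\lfloor \ell/2\rfloor}=k-1=\binom{\ell}{\lfloor \ell/2\rfloor}$. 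For the inductive step, assume $c_{t+1}=\binom{\ell}{t+1}$. A standard property of the colexicographic order is that the initial segment of $(t+1)$-subsets of $\mathbb{N}$ of length $\binom{\ell}{t+1}$ is precisely $\binom{[\ell]}{t+1}$, so $\C(c_{t+1},t+1)=\binom{[\ell]}{t+1}$ and its shadow is $\binom{[\ell]}{t}$. The bipartite inclusion graph between these two layers of $2^{[\ell]}$ is biregular with degrees $t+1$ and $\ell-t$; since $t+1\leq \lfloor \ell/2\rfloor$ implies $\ell-t\geq t+1$, we also have $\binom{\ell}{t}\leq \binom{\ell}{t+1}$. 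Hall's condition then holds trivially, producing a matching saturating the smaller side $\binom{[\ell]}{t}$, and no larger matching exists, so $c_t=\nu(\C(c_{t+1},t+1))=\binom{\ell}{t}$.

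Substituting $c_t=\binom{\ell}{t}$ into the bound of Theorem \ref{thm:main} yields exactly the value claimed by the corollary. For the matching upper bound I will appeal to the construction of Ferrara, Kay, Kramer, Martin, Reiniger, Smith and Sullivan \cite{ferrara2017saturation} revisited in Section \ref{sec:upper_cor}, which the introduction already notes produces a $k$-antichain saturated family of exactly this size for every $n\geq \ell+1$; together with the lower bound this gives equality. The only genuinely non-routine step is the inductive computation of the $c_t$, and even that requires no real obstacle: once the colex initial segments appearing in the recursion are identified with complete layers of $2^{[\ell]}$, the recursion collapses immediately and the rest of the argument is just citing the appropriate results.
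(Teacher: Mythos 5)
Your proposal is correct and follows essentially the same route as the paper: Sperner for $n\leq \ell$, the lower bound from \cref{thm:main} after collapsing the recursion to $c_t=\binom{\ell}{t}$ (the paper gets the needed matching $\nu\bigl(\binom{[\ell]}{t+1}\bigr)=\binom{\ell}{t}$ from the symmetric chain decomposition, \cref{lem:symchain}, while you use a biregularity/Hall count --- an immaterial difference), and the matching upper bound from the construction of \cite{ferrara2017saturation} described in \cref{sec:upper_cor}, valid for all $n\geq \ell+1$. No gaps.
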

In particular, whenever $k - 1$ is a central binomial coefficient (i.e. $k=3,4,7,11,21,36,\dots$) the value of $\sat(n,k)$ is determined for all $n$.

\medskip 
We now explain how Corollary \ref{cor:1} follows from Theorem \ref{thm:gener}. The upper bound was already known, and we prove a lower bound of $\sat(n,k)\geq (n+1-2\ell)(k-1)$ for $n$ sufficiently large. (Recall that $\ell$ is the smallest $j$ such that $\binom{j}{\lfloor j/2\rfloor}\geq k-1$, so $\ell=\Theta(\log k)$.)

By Dilworth's theorem \cite{dilworth1950decomposition}, having a chain decomposition of size at most $k-1$ is equivalent to not containing any antichain of size $k$. 
Suppose that $\F\subseteq 2^{[n]}$ is $k$-antichain saturated and so admits a decomposition into $k-1$ chains.
By Theorem \ref{thm:gener}, there are $k-1$ disjoint skipless chains $C^1,\dots,C^{k-1}$ that cover the elements of $\F$; since $\F$ is saturated, this must form a chain decomposition of $\F$. It suffices to show that every chain must contain a set of size at most $\ell$ and a set of size at least $n- \ell$. Suppose the smallest element $X$ of some chain $C^i$ has size $|X|>\ell$, then all subsets $Y$ of $X$ must be present in $\F$ since otherwise we may extend $C^i$ to include $Y$ (and that would mean that $\F\cup \{Y\}$ can also be covered by $k-1$ chains, contradicting the fact that $\F$ is $k$-antichain saturated). There are at least $k-1$ subsets of $X$ of size $\floor{\ell/2}$, and these cannot all be covered by the other $k-2$ chains. Since each chain contains an element of size at most $\ell$ and one of size at least $n-\ell$, the bound follows immediately from the fact that the chains are skipless.

\medskip
In order to prove the exact lower bound of Theorem \ref{thm:main}, we need to examine what happens on layers $1,\dots,\ell$. This is considerably more delicate and for this we use an auxiliary result concerning the matching number of the colex order (Lemma \ref{lem:colex_best}), which we give in Section \ref{sec:shadow_match}.
Finally, we give an explicit construction of a $k$-antichain saturated system $\F$ which matches our lower bound on each layer provided $n$ is sufficiently large. This construction was already known for the special case $k-1 = {\ell \choose \lfloor \ell/2\rfloor}$, and we apply it recursively for other values of $k$. The recursion requires special care and depends on a particular way of writing $k-1$ as a sum of binomial coefficients. This notation can be used to write exact values for the matching numbers $c_t$ from Theorem \ref{thm:main} (see Section \ref{sec:cascade}).

In Section \ref{sec:prelim}, we introduce our notation and the auxiliary results. In Section \ref{sec:gener} we prove Theorem \ref{thm:gener}. In Section \ref{sec:proof} we give the proofs of the lower bounds of Theorem \ref{thm:main} and \cref{cor:nicecase}. In Section \ref{sec:upperbound} we finish the proofs of Theorem \ref{thm:main}  and \cref{cor:nicecase} by giving the upper bound constructions.
In Section \ref{sec:concl} we give directions for future work.

\section{Preliminaries}
\label{sec:prelim}
Let $G=(U,V,E)$ be a bipartite graph on vertex sets $U$ and $V$ with edge set $E$. For $X\subseteq U$, we write $N(X)$ for the set of neighbours of $X$.
A \emph{matching} $M$ between $U$ and $V$ is a set of edges $M\subseteq E$ such that the edges are pairwise disjoint (i.e. $m\cap m'=\emptyset$ for all $m,m'\in M$).

We will write $[r,s]=\{r,r+1,\dots,s-1,s\}$ for the set of integers between $r$ and $s$ inclusive, and we will denote $[1,n]$ by $[n]$. The subsets of $[n]$ of size $t$ will be called \emph{layer} $t$ and we will denote them by
\[
    \binom{[n]}{t}=\{X\subseteq [n]\mid |X|=t\}.
\]
Similarly, let $\binom{[n]}{\geq t}=\{X\subseteq [n]\mid |X|\geq t\}$ be the subsets of size at least $t$ and $\binom{[n]}{\leq t}=\{X\subseteq [n]\mid |X|\leq t\}$ the subsets of size at most $t$.  For the set of all subsets of a set $X$, we use the notation $2^{X}$. For a set system $\F\subseteq 2^{[n]}$, we denote the collection of subsets of size $t$ in $\F$ by $\mathcal{F}_t$.

We will often consider the \emph{Hasse diagram} of $2^{[n]}$ where there is an edge from $X \subseteq [n]$ to $Y \subseteq [n]$ if $X \subseteq Y$ and $|Y| = |X|+1$.

A \emph{chain} $C\subseteq 2^{[n]}$ is a set system consisting of pairwise comparable elements, that is, $X\subseteq Y$ or $Y\subseteq X$ for all $X,Y\in C$.
An \emph{antichain} is a set system consisting of elements that are pairwise incomparable.

We say a chain $C$ in $2^{[n]}$ \emph{starts} in $R$ and \emph{ends} in $S$ if the smallest element of $C$ is in $R$ and the largest element of $C$ is in $S$. We say a chain $C_1 \subseteq \dotsb \subseteq C_m$  is \emph{skipless} if $|C_{i+1}| = |C_{i}| + 1$ for all $i \in [m-1]$ i.e. the chain does not `skip' over any layers.

A \emph{chain decomposition} of a set system $\F\subseteq 2^{[n]}$ is a collection of disjoint chains $C^1,\dots,C^m\subseteq \F$ such that $\F=\cup_{i=1}^m C^i$, that is, for each $X\in \F$, there is exactly one $i\in [m]$ such that $C^i$ contains the set $X$. The size of the chain decomposition is the number of chains $m$.

We assume the reader to be familiar with the following immediate consequence of Dilworth's theorem.
\begin{theorem}[Dilworth \cite{dilworth1950decomposition}]
    Let $n$ be an integer and $\F\subseteq 2^{[n]}$. The size of the largest antichain in $\F$ is equal to the minimal size of a chain decomposition of $\F$.
\end{theorem}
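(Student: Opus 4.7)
The plan is to prove Dilworth's theorem by induction on $|\F|$, with the base case $|\F|\leq 1$ being immediate. The easy inequality is free: if $A\subseteq \F$ is an antichain of size $k$ and $C^1,\dots,C^m$ is any chain decomposition of $\F$, then no two elements of $A$ lie in the same chain, so $m\geq k$. The work is therefore to exhibit, when the maximum antichain has size $k$, a chain decomposition with exactly $k$ chains.

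My first step would be to pick a maximum antichain $A$ of size $k$ and split $\F$ into the downward piece $\F^- = \{X\in \F : X\subseteq Y \text{ for some } Y\in A\}$ and the upward piece $\F^+ = \{X\in \F : Y\subseteq X \text{ for some } Y\in A\}$. Then $\F^-\cap \F^+ = A$, while $\F^-\cup \F^+=\F$, since any $X\in \F$ incomparable to every element of $A$ would extend $A$ to a strictly larger antichain. In the main case where $A$ is neither the set of all minimal nor the set of all maximal elements of $\F$, the subsystems $\F^-$ and $\F^+$ are both proper and each has maximum antichain size exactly $k$ (witnessed by $A$). Induction then supplies a chain decomposition of $\F^-$ and of $\F^+$ into $k$ chains each. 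I would argue that in any such decomposition of $\F^-$ the top element of each chain must lie in $A$ (otherwise some $X\in A$ would be strictly contained in some $Y\in \F^-$, and $Y\subseteq Z$ for some $Z\in A$ would violate $A$ being an antichain), and dually the bottom of each chain in $\F^+$ lies in $A$. Relabelling so that the $i$-th chain of each decomposition meets $A$ in the same element and concatenating pairwise then yields the desired $k$ chains in $\F$.

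The main obstacle is the degenerate case in which every maximum antichain of $\F$ coincides with either the set $\min(\F)$ of minimal elements or the set $\max(\F)$ of maximal elements. Here I would pick any maximal element $a\in \F$ together with a minimal element $b\in \F$ satisfying $b\subseteq a$, and set $\F' := \F\setminus\{a,b\}$ (reducing to $\F\setminus\{a\}$ when $a=b$). The key sub-claim is that the maximum antichain in $\F'$ has size at most $k-1$: any antichain $A'\subseteq \F'$ of size $k$ would remain a maximum antichain of $\F$, so by degeneracy $A'$ would equal $\min(\F)$ or $\max(\F)$, forcing $b\in A'$ or $a\in A'$ and contradicting $A'\subseteq \F'$. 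By induction $\F'$ admits a chain decomposition into $k-1$ chains, and appending the chain $\{b,a\}$ (or $\{a\}$ if $a=b$) produces a chain decomposition of $\F$ of size $k$, closing the induction.
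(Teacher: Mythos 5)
Your proposal is correct, but there is nothing in the paper to compare it with: the paper states Dilworth's theorem as a known classical result, citing Dilworth's 1950 paper, and never proves it (it is only used, via the remark that avoiding an antichain of size $k$ is equivalent to having a chain decomposition into $k-1$ chains). What you have written is essentially the standard inductive proof from the literature, and the details check out: the trivial inequality, the identities $\F^-\cup\F^+=\F$ and $\F^-\cap\F^+=A$ for a maximum antichain $A$, the observation that each of the $k$ chains of the inductive decomposition of $\F^-$ meets $A$ in exactly one element which must be its top (dually for $\F^+$), and the disjointness of the concatenated chains, which follows since the two decompositions only overlap in $A$. The one step you use implicitly is that $\F^+=\F$ holds exactly when $A=\min(\F)$ (and dually $\F^-=\F$ exactly when $A=\max(\F)$); this is easy to verify and shows that your main case and your degenerate case (every maximum antichain equals $\min(\F)$ or $\max(\F)$) genuinely exhaust all possibilities, so removing a maximal element $a$ together with a minimal $b\subseteq a$ does drop the maximum antichain size and the induction closes. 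So the argument is complete; it simply supplies a proof the authors chose to cite rather than include.
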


The symmetric chain decomposition described in  \cite{aigner,GK76} gives the following result.
\begin{lemma}
    \label{lem:symchain}
    There is a skipless chain decomposition of $2^{[n]}$ into $\binom{[n]}{\lfloor n/2\rfloor}$ chains. In particular, there is a matching of size $\binom{n}s$ from $\binom{[n]}{s}$ to $\binom{[n]}{r}$ whenever $s<r\leq \lceil n/2\rceil$ or $s>r\geq \lfloor n/2\rfloor$.
\end{lemma}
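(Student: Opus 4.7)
The plan is to prove the stronger statement that $2^{[n]}$ admits a chain decomposition into chains that are simultaneously skipless and \emph{symmetric} (a chain $C_1\subsetneq\dots\subsetneq C_k$ being symmetric means $|C_1|+|C_k|=n$), and then to read off both the count and the matching claim from this decomposition. I would proceed by induction on $n$, with the base case $n=1$ given by the single chain $\emptyset\subsetneq\{1\}$.

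For the inductive step, suppose we have a symmetric skipless chain decomposition of $2^{[n-1]}$. For each chain $A_1\subsetneq A_2\subsetneq\dots\subsetneq A_k$ in this decomposition I replace it by up to two chains in $2^{[n]}$: an extended chain
\[
A_1\subsetneq A_2\subsetneq\dots\subsetneq A_k\subsetneq A_k\cup\{n\},
\]
and, when $k\geq 2$, a shifted chain
\[
A_1\cup\{n\}\subsetneq A_2\cup\{n\}\subsetneq\dots\subsetneq A_{k-1}\cup\{n\}.
\]
A direct check shows both chains are skipless, that their extreme sizes each sum to $n$ (using $|A_1|+|A_k|=n-1$), and that ranging over all chains in the decomposition of $2^{[n-1]}$ produces a partition of $2^{[n]}$: each $X\subseteq[n]$ with $n\notin X$ lies in the first chain associated with its original chain in $2^{[n-1]}$, while each $X=A\cup\{n\}$ with $A$ strictly below the top (resp. at the top) of its chain is placed in the shifted (resp. extended) chain. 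The step where one must be careful is the case $k=1$, where no shifted chain is produced, which is exactly the bookkeeping needed for the count.

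To count the chains, observe that since each chain in the final decomposition is symmetric and skipless with size range $[a,n-a]$ for some $a\leq\lfloor n/2\rfloor$, every chain contains exactly one subset of size $\lfloor n/2\rfloor$; hence the number of chains equals $\binom{n}{\lfloor n/2\rfloor}$. For the matching claim with $s<r\leq\lceil n/2\rceil$, a chain with size range $[a,n-a]$ visits layer $s$ iff $a\leq s$, and the inequality $a\leq s<r\leq n-a$ then also forces it to visit layer $r$. Since distinct chains give distinct elements in each layer, pairing each element of layer $s$ with the layer-$r$ element of its chain yields an injection—and hence a matching—of size $\binom{n}{s}$. The case $s>r\geq\lfloor n/2\rfloor$ follows by taking complements, which turns a matching between layers $n-s$ and $n-r$ into one between layers $s$ and $r$. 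The main obstacle is really the case analysis for $k=1$ in the inductive step; once the two-chain construction is verified to preserve disjointness, symmetry, and skiplessness, the matching claim is a direct corollary.
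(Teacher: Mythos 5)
Your proof is correct, and it is essentially the argument the paper relies on: the paper simply cites the standard symmetric chain decomposition of \cite{aigner,GK76}, and your inductive two-chain construction is exactly that classical decomposition, with the chain count and the matching between layers read off from symmetry and skiplessness just as intended.
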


\subsection{Colex for shadows and matchings}
\label{sec:shadow_match}
In the \textit{colexicographic} or \textit{colex} order  on $\binom{[n]}t$, we have $A<B$ if $\max(A\triangle B) \in B$, where $\triangle$ denotes the symmetric difference $A\triangle B=(A\setminus B)\cup (B\setminus A)$.
Informally, sets with larger elements come later in the order. For $t=3$ the initial segment of size 8 in  colex is
given by
\[
    \{1, 2, 3\}, \{1, 2, 4\}, \{1, 3, 4\}, \{2, 3, 4\}, \{1, 2, 5\}, \{1, 3, 5\}, \{2, 3, 5\}, \{1, 4, 5\}.
\]
We write $\C(m,t)$ for the initial segment of colex on layer $t$ of size $m$.

For a family of sets $A \subseteq \binom{[n]}{t}$, the \emph{shadow} of $\A$ is given by
\[
    \partial\A = \{X\in \tbinom{[n]}{t-1}\mid X\subseteq Y \text{ for some }Y\in \A\}.
\]

The well-known Kruskal-Katona theorem below shows that the shadow of a family of subsets of size $t$ is minimised by taking the family to be an initial segment of colex, and we will prove an analogous result about matchings between a family and its shadow.

\begin{theorem}[Kruskal-Katona \cite{kruskal}]
    \label{thm:KKT}
    Let $1\leq t\leq n$ be integers.
    Let $\B\subseteq \binom{[n]}t$ and let $\C$ be the initial segment of colex on $\binom{[n]}t$ of size $|\B|$. Then $|\partial \B|\geq |\partial \C|$.
\end{theorem}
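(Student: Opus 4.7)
The plan is to prove the theorem by the classical compression argument. For $1 \leq i < j \leq n$, define the $(i,j)$-compression of $\B$: replace each $A \in \B$ with $j \in A$ and $i \notin A$ by $(A \setminus \{j\}) \cup \{i\}$ whenever the latter is not already in $\B$. This clearly preserves $|\B|$. The first key step is to show that this operation cannot increase $|\partial \B|$: a short case analysis, organised by whether a shadow element $X$ contains $i$ and/or $j$, exhibits an injection from newly created shadow sets into those that are removed. Iterating such compressions (the process terminates because $\sum_{A \in \B} \sum_{x \in A} 2^x$ strictly decreases at each nontrivial step) produces a \emph{left-compressed} family $\B^\ast$ with $|\B^\ast| = |\B|$ and $|\partial \B^\ast| \leq |\partial \B|$.

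It then suffices to prove the theorem for left-compressed $\B^\ast$, and I would argue by induction on $n$. Split
\[
    \B^\ast = \B_0 \sqcup \bigl\{A \cup \{n\} : A \in \B_1\bigr\}
\]
with $\B_0 \subseteq \binom{[n-1]}{t}$ and $\B_1 \subseteq \binom{[n-1]}{t-1}$, so that
\[
    \partial \B^\ast = \bigl(\partial \B_0 \cup \B_1\bigr) \sqcup \bigl\{X \cup \{n\} : X \in \partial \B_1\bigr\}.
\]
After one further round of compressions between the two parts (pushing sets that contain $n$ downwards whenever possible), one can ensure the invariant $\partial \B_0 \subseteq \B_1$, whence $|\partial \B^\ast| = |\B_1| + |\partial \B_1|$. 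Applying the induction hypothesis to $\B_1 \subseteq \binom{[n-1]}{t-1}$ and comparing with the analogous splitting of the initial colex segment $\C(|\B|,t)$ — whose bottom and top pieces are themselves initial segments of colex in layers $t$ and $t-1$ of $[n-1]$ — then closes the induction.

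The main obstacle I expect is this final comparison: one must verify that invoking the inductive bound on $\B_1$ really outputs $|\partial \C(|\B|,t)|$. This is most cleanly handled by writing $|\B|$ in its \emph{cascade} form $\binom{a_t}{t} + \binom{a_{t-1}}{t-1} + \dots + \binom{a_s}{s}$ with $a_t > a_{t-1} > \dots > a_s \geq s$, computing $|\partial \C(|\B|,t)|$ explicitly from this expansion, and tracking how the cascade transforms when passing from layer $t$ to layer $t-1$. The delicate boundary cases — when $\B_0$ is empty, and when $\B_0$ fills all of $\binom{[n-1]}{t}$ so that $a_t = n-1$ and the cascade is truncated — need to be handled separately to guarantee that the induction matches up exactly with the shadow size of the initial segment.
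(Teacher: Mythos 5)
First, a point of reference: the paper does not prove Theorem \ref{thm:KKT} at all --- it quotes the classical Kruskal--Katona theorem with a citation --- so your proposal can only be judged against the standard proofs in the literature, not against an argument in this paper. Judged that way, your compression step is fine (pairwise $(i,j)$-compressions preserve size, do not increase the shadow, and terminate), but there is a genuine gap at the step you treat as routine: the claim that ``one further round of compressions between the two parts (pushing sets that contain $n$ downwards whenever possible)'' yields the invariant $\partial \B_0 \subseteq \B_1$. That invariant is false even for initial segments of colex, so no shadow-nonincreasing normalisation can achieve it. Concretely, take $n=4$, $t=2$ and $\B^\ast=\{\{1,2\},\{1,3\},\{2,3\},\{1,4\}\}$, the colex segment of size $4$: then $\B_0=\binom{[3]}{2}$, $\B_1=\{\{1\}\}$, and $\partial\B_0=\{\{1\},\{2\},\{3\}\}\not\subseteq\B_1$; the identity you derive from the invariant would give $|\partial\B^\ast|=|\B_1|+|\partial\B_1|=2$, whereas in fact $|\partial\B^\ast|=4$. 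Note also that your proposed normalisation works against the invariant: pushing sets containing $n$ downwards enlarges $\B_0$ and shrinks $\B_1$. For left-compressed families (with $t<n$) the true inclusion at this split is the reverse one, $\B_1\subseteq\partial\B_0$, which gives $|\partial\B^\ast|=|\partial\B_0|+|\partial\B_1|$; but then closing the induction requires minimising $|\partial\B_0|+|\partial\B_1|$ over all ways of splitting $|\B|$ between the two layers, which is precisely the hard part and is not handled by ``applying the induction hypothesis to $\B_1$''.

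The standard repairs are the following. Either split on the element $1$ rather than $n$: with $\B_1$ the link of $1$ and $\B_0$ the sets avoiding $1$, left-compressedness gives $\partial\B_0\subseteq\B_1$ verbatim (delete an element of $A\in\B_0$ and insert $1$ via the $(1,x)$-compression), so $|\partial\B^\ast|=|\B_1|+|\partial\B_1|$ holds, and Frankl's short proof then pins down $|\B_1|$ from below by a contradiction argument that applies the induction hypothesis to $\B_0$, after which the cascade bookkeeping you describe finishes the job. Alternatively, abandon pairwise compressions in favour of Daykin's $(U,V)$-compressions, which transform $\B$ into the colex initial segment itself. As sketched, however, your argument does not go through at the split-at-$n$ step.
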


For $\B\subseteq \binom{[n]}t$, let $\nu(\B)$ denote the size of the maximum matching in the bipartite graph between $\B$ and $\partial \B$, where $X\in \B$ is adjacent to $Y\in \partial \B$ if $Y\subseteq X$.

\begin{lemma}
    \label{lem:colex_best}
    Let $1\leq t\leq n$ be integers.
    Let $\B\subseteq \binom{[n]}t$ and let $\C$ be the initial segment of colex on $\binom{[n]}t$ of size $|\B|$. Then $\nu(\B)\geq \nu(\C)$.
\end{lemma}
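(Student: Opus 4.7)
The strategy is to combine the defect version of Hall's theorem with Kruskal--Katona (Theorem~\ref{thm:KKT}) in order to reduce this matching statement to a shadow statement we already understand.

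First I apply the defect form of Hall's theorem to the bipartite graph between $\F$ and $\partial \F$ in which each $X \in \F$ is adjacent to the $(t-1)$-subsets $Y \subseteq X$. Since the neighborhood of any subfamily $\A \subseteq \F$ in this graph is exactly $\partial \A$, this yields
\[
    \nu(\F) = |\F| - d(\F), \qquad d(\F) := \max_{\A \subseteq \F}\bigl(|\A|-|\partial \A|\bigr).
\]
Because $|\B| = |\C|$, the desired inequality $\nu(\B) \ge \nu(\C)$ is equivalent to $d(\C) \ge d(\B)$, i.e.\ showing that initial colex segments achieve the largest Hall deficiency.

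To establish this, I take any $\A \subseteq \B$ and compare it with $\A^* := \C(|\A|, t)$, the initial colex segment on layer $t$ of the same size. Initial colex segments are nested, so $\A^* \subseteq \C$ (since $|\A^*|=|\A| \le |\B| = |\C|$). Kruskal--Katona then gives $|\partial \A^*| \le |\partial \A|$, hence $|\A^*| - |\partial \A^*| \ge |\A| - |\partial \A|$. Taking the maximum over $\A \subseteq \B$ yields $d(\C) \ge d(\B)$, and the conclusion $\nu(\C) \le \nu(\B)$ follows. The argument presents no serious obstacle; the only point that needs care is the nesting of initial colex segments, which ensures that the Kruskal--Katona optimizer $\A^*$ genuinely sits inside $\C$ and can therefore be used as a valid candidate when evaluating $d(\C)$.
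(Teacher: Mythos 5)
Your proof is correct, and it is genuinely different from (and more direct than) the argument in the paper. You apply the defect Hall formula from the side of the family itself, so that the relevant neighbourhood of a subfamily $\A\subseteq\B$ is exactly its shadow $\partial\A$, giving $\nu(\B)=|\B|-\max_{\A\subseteq\B}(|\A|-|\partial\A|)$; then Kruskal--Katona applied to an arbitrary deficiency-maximising $\A$, together with the nestedness of initial colex segments (which guarantees $\C(|\A|,t)\subseteq\C$), immediately yields that colex maximises the deficiency, and the lemma follows with no induction. The paper instead applies the defect form of Hall's theorem from the shadow side, where the neighbourhood of a subset of $\partial\B$ has no clean description, and consequently has to run an induction on $|\B|$: in the case $\nu(\B)<|\partial\B|$ it locates an element $B\in\B$ whose removal decreases the matching number and compares $\B\setminus\{B\}$ with $\C$ minus its last colex element. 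Your orientation of the bipartite graph is the key simplification that lets Kruskal--Katona be invoked directly on subfamilies; the paper's orientation only permits Kruskal--Katona in the boundary case $\nu(\B)=|\partial\B|$, which is why it needs the extra inductive machinery. Both arguments use exactly the same two ingredients (defect Hall and Kruskal--Katona), but yours is shorter and, in my view, cleaner; the only points requiring care, which you handle, are that $N(\A)=\partial\A$ in your graph and that the colex optimiser of the same size sits inside $\C$.
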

We could not find a reference for this result, so we will provide a proof for completeness.
Our proof relies on the Kruskal-Katona theorem and the following variant of Hall's theorem.
\begin{lemma}
    \label{lem:Hall}
    Let $G=(U,V,E)$ be a bipartite graph. The largest matching in $G$ between $U$ and $V$ has size $|U|-d$, where
    \[
        d = \max_{X\subseteq U} (|X|-|N(X)|).
    \]
\end{lemma}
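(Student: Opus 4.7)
My plan is to derive the defect form of Hall's theorem from the standard marriage theorem via a dummy-vertex construction. Write $d := \max_{X\subseteq U}(|X|-|N(X)|)$, and note that taking $X=\emptyset$ shows $d\geq 0$. The proof splits into showing that the maximum matching has size both at most and at least $|U|-d$.

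For the upper bound, I take an arbitrary matching $M$ in $G$ and a subset $X_0\subseteq U$ attaining the maximum in the definition of $d$. Any vertex of $X_0$ that is matched by $M$ is paired with a vertex of $N(X_0)$, and these partners are pairwise distinct, so at most $|N(X_0)|$ vertices of $X_0$ are covered by $M$. Combining this with the trivial bound that at most $|U\setminus X_0|$ vertices of $U\setminus X_0$ can be covered, we get $|M| \leq (|U|-|X_0|)+|N(X_0)| = |U|-d$.

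For the matching lower bound, I would augment $G$ to a graph $G' = (U, V\cup D, E')$ by introducing a set $D$ of $d$ fresh "dummy" vertices disjoint from $V$ and joining every $u\in U$ to every $w\in D$. Then for any nonempty $X\subseteq U$ we have $|N_{G'}(X)| = |N_G(X)|+d \geq (|X|-d)+d = |X|$ by the definition of $d$, while $X=\emptyset$ is trivial; thus Hall's condition holds in $G'$ and yields a matching $M'$ saturating $U$. At most $d$ edges of $M'$ are incident to $D$, so deleting those edges produces a matching in $G$ of size at least $|U|-d$, matching the upper bound.

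The only subtlety worth highlighting is that the maximum in the definition of $d$ ranges over \emph{every} subset of $U$ (with the $X = \emptyset$ case ensuring $d \geq 0$ so that the construction of $G'$ makes sense, and the nonempty subsets being exactly those for which Hall's condition needs verification). With this book-keeping in place the argument reduces cleanly to the ordinary Hall theorem, and I do not foresee any serious obstacle.
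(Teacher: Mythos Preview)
Your argument is correct: the upper bound via a maximising set $X_0$ and the lower bound via the dummy-vertex construction are both standard and cleanly executed. Note, however, that the paper does not actually prove this lemma; it is stated as a known variant of Hall's theorem and used as a black box in the proof of Lemma~\ref{lem:colex_best}. So there is no ``paper's own proof'' to compare against, and your derivation simply supplies the omitted justification.
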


\begin{proof}[Proof of Lemma \ref{lem:colex_best}]
    We prove the lemma by induction on $|\B|$. When $|\B|=1$, $\nu(\B)=1$ for all $\B\subseteq \binom{[n]}t$. If $\nu(\B)=|\partial \B|$, then the Kruskal-Katona theorem (Theorem \ref{thm:KKT}) gives
    \[
        \nu(\B)=|\partial\B|\geq |\partial \C|\geq \nu(\C).
    \]

    We now assume that $\nu(\B)<|\partial\B|$, and show that there is a $B\in \B$ for which $\nu(\B\setminus\{B\})<\nu(\B)$.
    The lemma then follows by induction. Indeed, let $B$ denote the element for which $\nu(\B\setminus\{B\})<\nu(\B)$. Let $C\in \C$ denote the last element of $\C$ in the colex order. Then $\C\setminus \{C\}$ is an initial segment of colex so the induction hypothesis shows that $\nu(\B\setminus \{B\})\geq \nu(\C\setminus\{C\})$. Hence
    \[
        \nu(\B)> \nu(\B\setminus \{B\})\geq \nu(\C\setminus\{C\})\geq \nu(\C)-1.
    \]
    Since all numbers are integers, we find $\nu(\B)\geq \nu(\C)$ as desired.

    It remains to show the claim that there is a $B\in \B$ for which $\nu(\B\setminus\{B\})<\nu(\B)$ when $\nu(\B) < |\partial\B|$.
    Consider the bipartite graph $G$ between $U=\partial \B$ and $V=\B$ (where $u\in U$ is adjacent to $v\in V$ if $u\subseteq v$).
    By \cref{lem:Hall}, $\nu(\B)= |\partial \B|-d$ where
    \[
        d= \max_{\X \subseteq \partial \B}|\X|- |N(\X)|.
    \]
    Pick $\X \subseteq \partial \B$ such that $|\X|-|N(\X)|=d$, and note that $d \geq 1$ by assumption. This means $\X$ is non-empty and there is some set $B \in N(\X)$. Consider the largest matching when we remove $B$ from $G$. In this graph $|\X| - |N(\X)|$ is $d+1$ and so applying  \cref{lem:Hall} shows that the largest matching between $\partial \B$ and $\B \setminus \{B\}$ is of size at most $|\partial \B| -  d - 1$.
    This proves that $\nu(\B\setminus\{B\})<\nu(\B)$, as desired.
\end{proof}

\subsection{Cascade notation}
\label{sec:cascade}
Let $m,r$ be integers.
For our upper bound construction, we need a result which gives the value of $\nu(\C(m,r))$.

There is a unique way of writing $m$ as
\[m = \binom{a_r}{r} + \binom{a_{r-1}}{r-1} + \dotsb + \binom{a_s}{s}\]
where $r\geq s \geq 1$, $a_r > \dotsb > a_s > 0$ and $a_i \geq i$ for all $i \in [s]$. The initial segment of colex $\C(m,r)$ of size $m$ on layer $r$ is the union of the set $\binom{[a_r]}{r}$, the set containing all elements of the form $A \cup \{a_r +1\}$ with $A \in \binom{[a_{r-1}]}{r - 1}$, the set containing all elements of the form $A \cup \{a_r +1, a_{r-1} + 1\}$ where $A \in \binom{[a_{r-2}]}{r-2}$, and so on until the sets containing all the elements of the form $A \cup \{a_r + 1, a_{r-1} + 1, \dots, a_{s+1} + 1\}$ where $A \in \binom{[a_{s}]}{s}$.

The expansion above is also called the \textit{$r$-cascade notation} of $m$  and may be built recursively as follows. Take $a_r$ to be the largest $j$ such that $\binom{j}{r} \leq m$, and set $m' = m - \binom{j}{r}$. If $m' = 0$, the recursion ends. Otherwise append the expansion for $m'$ and $r' = r - 1$.

This expansion can be used to compute the size of the shadow $|\partial\C(m,r)|$, but we are interested in using it to give the precise value of $\nu(\C(m,r))$ as follows.
\begin{lemma}
    \label{lem:colex-nu}
    Let $r\geq s \geq 1$ and $a_{r} > \dots > a_{s} > 0$ be such that \begin{equation}
        \label{eq:initial_exp}
        m = \binom{a_r}{r} + \binom{a_{r-1}}{r-1} + \dotsb + \binom{a_s}{s}.
    \end{equation}
    If $i\leq \lceil a_i/2\rceil$ for all $i\in [s,r]$, then $\nu(\C(m,r))=\sum_{i=s}^r\binom{a_i}{i-1}$. Otherwise, let $j\in [s,r]$ be maximal such that $j > \ceil{a_j/2}$. Then
    \[\nu(\C(m, r)) = \binom{a_r}{r-1} + \dotsb + \binom{a_{j+1}}{j} + \binom{a_j}{j} + \dotsb + \binom{a_s}{s}.\]
\end{lemma}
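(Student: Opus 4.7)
The plan is to induct on the cascade length $r-s+1$, exploiting the recursive decomposition
\[
\C(m,r) \;=\; \binom{[a_r]}{r} \;\sqcup\; \bigl(\{a_r+1\}*\C(m',r-1)\bigr),
\]
where $m' = m - \binom{a_r}{r}$ and $\{Y\}*\A := \{X\cup Y : X\in \A\}$, together with the analogous shadow identity $\partial\C(m,r) = \binom{[a_r]}{r-1} \sqcup (\{a_r+1\}*\partial\C(m',r-1))$ (both unions disjoint because the first part avoids $a_r+1$). The base case of a single cascade term $m=\binom{a_r}{r}$ is immediate from Lemma \ref{lem:symchain}, which gives $\nu(\C(m,r))=\min(\binom{a_r}{r},\binom{a_r}{r-1})$ and agrees with both cases of the formula. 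For the inductive step I would split on whether index $r$ is ``case 1'' ($r\leq \lceil a_r/2\rceil$) or ``case 2'' ($r > \lceil a_r/2\rceil$).

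In the case-1 regime, the formula collapses to $\nu(\C(m,r)) = \binom{a_r}{r-1} + \nu(\C(m',r-1))$, since the maximum case-2 index $j$ (if any) lies in $[s,r-1]$. For the lower bound I would combine the matching $\binom{[a_r]}{r-1}\hookrightarrow\binom{[a_r]}{r}$ from Lemma \ref{lem:symchain} with the inductive matching for $\C(m',r-1)$ prefixed throughout by $\{a_r+1\}$; these are disjoint since their shadows lie in $\binom{[a_r]}{r-1}$ and in $\{a_r+1\}*\partial\C(m',r-1)$ respectively. For the upper bound I would classify the edges of any matching $M$ by whether the shadow contains $a_r+1$: shadows missing $a_r+1$ all lie in $\binom{[a_r]}{r-1}$ (even when the top is in $\{a_r+1\}*\C(m',r-1)$, since removing $a_r+1$ projects into $\C(m',r-1)\subseteq\binom{[a_{r-1}]}{r-1}$), contributing at most $\binom{a_r}{r-1}$ pairs, while shadows containing $a_r+1$ correspond after stripping to a matching $\C(m',r-1)\to\partial\C(m',r-1)$, contributing at most $\nu(\C(m',r-1))$.

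In the case-2 regime we have $j=r$, and the formula becomes $\nu(\C(m,r))=m$: a perfect matching from $\C(m,r)$ into $\partial\C(m,r)$ is claimed. The upper bound $\nu\leq m$ is trivial, so the content is Hall's condition (Lemma \ref{lem:Hall}) for the bipartite graph $\C(m,r) \leftrightarrow \partial\C(m,r)$. By Kruskal--Katona (Theorem \ref{thm:KKT}), $|\partial\X|\geq|\partial\C(|\X|,r)|$ for every $\X\subseteq\C(m,r)$, so Hall reduces to verifying $|\partial\C(k,r)|\geq k$ for all $k\leq m$. Writing $k$'s $r$-cascade as $(b_s,\dots,b_r)$ with $b_r\leq a_r\leq 2r-2$, iterating the shadow decomposition yields
\[
|\partial\C(k,r)| - k \;=\; \sum_{i=s}^{r}\left(\binom{b_i}{i-1}-\binom{b_i}{i}\right),
\]
whose leading summand $\binom{b_r}{r-1}-\binom{b_r}{r}\geq \binom{b_r}{r-1}/r$ is strictly positive. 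A recursive induction on $r$ that uses the cascade constraint $k'<\binom{b_r}{r-1}$ at each step then shows the full sum stays non-negative even when deeper levels with $b_i\geq 2i$ contribute negative pieces.

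The hardest step I expect is this Hall verification in case 2: the sub-cascade of $k'$ can begin with $b_{r-1}=2r-3$, which puts it outside the case-2 regime and blocks a direct inductive appeal, so one has to descend further down the cascade and carefully balance the positive leading contribution against accumulated deficits. By contrast, the inductive step in case 1 is a clean splitting argument, and the base case is immediate from symmetric chain decomposition.
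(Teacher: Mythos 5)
Your base case and your case-1 step are fine, and your case-1 upper bound (classifying the edges of an arbitrary matching by whether the shadow set contains $a_r+1$, and noting that the shadows avoiding $a_r+1$ all land in $\binom{[a_r]}{r-1}$) is actually a cleaner way to get $\nu(\C(m,r)) \leq \binom{a_r}{r-1} + \nu(\C(m',r-1))$ than the paper's corresponding step, which asserts that a maximum matching may be taken to contain a fixed matching saturating $\binom{[a_r]}{r-1}$ before stripping $a_r+1$ and recursing. Up to that point the two arguments are the same peeling induction.

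The gap is in case 2 ($j=r$), where a perfect matching of $\C(m,r)$ into $\partial\C(m,r)$ is required. You reduce via Hall's condition and Kruskal--Katona to the inequality $|\partial\C(k,r)|\geq k$ for all $k\leq m$, rewrite it as non-negativity of $\sum_i\bigl(\binom{b_i}{i-1}-\binom{b_i}{i}\bigr)$ over the cascade of $k$, and then only gesture at a ``recursive induction'' that ``carefully balances'' the positive leading term against the negative deeper terms; indeed you flag yourself that the sub-cascade can begin with $b_{r-1}=2r-3$, which is outside the case-2 regime, so the induction you set up does not apply to it. As written, this inequality --- which is the entire content of case 2 --- is unproven. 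It is true, but you are working much too hard: since $a_r\leq 2r-2$, every set of $\C(m,r)$ (hence of every $\C(k,r)$ with $k \le m$) lies in $\binom{[a_r+1]}{r}$ with $a_r+1\leq 2r-1$, so $\floor{(a_r+1)/2}\leq r-1$ and Lemma \ref{lem:symchain} gives a matching from $\binom{[a_r+1]}{r}$ to $\binom{[a_r+1]}{r-1}$ saturating the whole layer; restricting it to $\C(m,r)$ already yields the desired perfect matching into $\partial\C(m,r)$, with no need for Hall, Kruskal--Katona, or any cascade computation. This is exactly how the paper treats its base case $r-j=0$. (If you insist on the Hall route, the needed check also follows in one line from the normalized matching/local LYM inequality on $[2r-1]$, since $\binom{2r-1}{r}=\binom{2r-1}{r-1}$; but the point is that some such argument must replace your sketch.)
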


\begin{proof}
    The first claim follows from the fact that there is a matching of size $\binom{a_i}{i-1}$ between $\binom{[a_i]}{i}$ and $\partial \binom{[a_i]}{i}=\binom{[a_i]}{i-1}$ when $i\leq \lceil a_i/2\rceil$ (see Lemma \ref{lem:symchain}).

    We prove the second claim by induction on $r-j$, starting with $r - j = 0$. Note that $\C(m,r) \subseteq \binom{[a_{r} + 1]}{r}$, and that $\ceil{a_r/2} < r$ implies that $\floor{(a_r + 1)/2} \leq r - 1$. By Lemma \ref{lem:symchain}, there is a matching from $\binom{[a_{r} + 1]}{r}$ to $\binom{[a_{r} + 1]}{r - 1}$ of size $\binom{a_r + 1}{r}$ and this induces a matching of size $|\C|$ from $\C$ to $\partial \C$, as required.

    Now let $r - j \geq 1$, and suppose $M$ is a maximum matching from $\C$ to $\partial \C$. The sets in $\C$ which do not contain $a_r + 1$ are exactly $\binom{[a_r]}{r}$ and these can only be matched to sets in $\partial \C$ which do not contain $a_r + 1$, namely to sets in $\binom{[a_r]}{r-1}$. Since $r \leq \ceil{a_r/2}$, there is a matching $M'$ of size $\binom{a_r}{r-1}$ between $\binom{[a_r]}{r}$ and $\binom{[a_{r-1}]}{r-1}$ by \cref{lem:symchain}. We can assume that $M'\subseteq  M$ since elements in $\binom{[a_r]}{r}$ cannot be matched outside of $\binom{[a_r]}{r-1}$ by $M$.
    Let $\C'$ be obtained from $\C \setminus \binom{[a_r]}{r}$ by deleting the element $a_r + 1$ from every set. Then $\C'$ is an initial segment of colex of size $m-  \binom{a_r}{r}$ from layer $r-1$, and we apply the induction hypothesis to $\C'$ to get the result.
\end{proof}

\section{Generalisation of a result of Lehman-Ron}
\label{sec:gener}
We will prove Theorem \ref{thm:gener} from the following lemma using an inductive argument.
\begin{lemma}
    \label{lem:multi_complete_chains}
    Let $s \leq r \leq n$ be integers. Let $C^1,\dots,C^{m}$ be disjoint chains, such that for all $i\in [m-1]$, the chain $C^i$ starts in layer $s$ and ends in layer $r$. Suppose that $C^m$ starts in $A\in \binom{[n]}{\leq s}$ and ends in $B\in \binom{[n]}{r}$.
    Then there exist $m$ disjoint chains $D^1,\dots,D^{m}$ with the following three properties.
    \begin{enumerate}
        \item For $i\in [m-1]$, the chain $D^i$ starts in the $s$th layer, ends in the $r$th layer and is skipless.
        \item The chain $D^{m}$ starts at $A$ and intersects the $i$th layer for all $i\in [s+1,r]$.
        \item The chains $D^1,\dots,D^{m}$ cover the elements in $C^1,\dots,C^m$.
    \end{enumerate}
\end{lemma}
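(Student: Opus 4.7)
The plan is to prove this by induction on $r - s$. The base case $r = s$ is immediate: for each $i \in [m-1]$ the chain $C^i$ is forced to be the singleton $\{X_i\}$, so setting $D^i = C^i$ for $i \in [m-1]$ and $D^m = C^m$ satisfies all three conditions (condition 2 becoming vacuous).

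For the inductive step with $r > s$, I would peel off the top layer $r$ and reduce to the analogous problem with top layer $r - 1$. For each $i \in [m]$, let $P_i$ denote the immediate predecessor of $Y_i$ (or of $B$, when $i = m$) in $C^i$, so $|P_i| \leq r - 1$, and set $I = \{i \in [m] : |P_i| < r - 1\}$. The aim is to choose $m$ pairwise distinct layer-$(r-1)$ sets $W_1, \dots, W_m$ with $P_i \subseteq W_i \subseteq Y_i$ (or $W_m \subseteq B$) and with no $W_i$ coinciding with any element of $C^j$ for $j \neq i$. For $i \notin I$ the choice $W_i = P_i$ is forced (otherwise $\tilde C^i$ defined below would have two elements in layer $r-1$); for $i \in I$ the element $W_i$ is new and has to be picked from the $r - |P_i| \geq 2$ candidates $\{Y_i \setminus \{x\} : x \in Y_i \setminus P_i\}$.

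Given such $W_i$'s, I would form modified chains $\tilde C^i$ by removing $Y_i$ (or $B$) from each $C^i$ and, for $i \in I$, adjoining $W_i$. These are pairwise disjoint chains ending in layer $r - 1$ and satisfying the hypotheses of the lemma with top layer $r - 1$, so the inductive hypothesis yields chains $\tilde D^1, \dots, \tilde D^m$ with the required structure one layer lower. Because each $\tilde D^i$ has exactly one element in layer $r - 1$ and the $\tilde D^i$'s together cover the $W_i$'s, the layer-$(r-1)$ tops of the $\tilde D^i$'s form a permutation $(W_{\pi(1)}, \dots, W_{\pi(m)})$ of $(W_1, \dots, W_m)$. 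I then extend each $\tilde D^i$ by the layer-$r$ set $Y_{\pi(i)}$ (or $B$ if $\pi(i) = m$), which contains $W_{\pi(i)}$ by construction; this preserves chain-ness and skiplessness, and the chain that starts at $A$ is still the one extending $\tilde C^m$. The resulting chains $D^1, \dots, D^m$ satisfy all three conditions.

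The main obstacle will be the existence of a valid family $\{W_i : i \in I\}$. These sets have to be pairwise distinct while also avoiding the forced values $\{P_j : j \notin I\}$, and for small $r$ or when many $P_j$ happen to fit inside some $Y_i$, the candidate sets can overlap heavily. I would verify Hall's condition for this system of distinct representatives by a combinatorial argument that exploits the disjointness of the original chains and the structure of the Boolean lattice (for instance, via the normalised matching property or a careful case analysis on how the $P_j$ nest inside the $Y_i$). This Hall-type step is the delicate part of the argument.
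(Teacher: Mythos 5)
Your reduction breaks at exactly the step you flag as ``delicate'': the required system of distinct representatives $\{W_i\}$ with $P_i \subseteq W_i \subseteq Y_i$ need not exist, so this is not a Hall-condition verification you have merely postponed --- in the rigid form you state it, the claim is false. Concretely, take $n=4$, $s=1$, $r=3$, $m=4$ with $C^1=\{\{1\},\{1,2,3\}\}$, $C^2=\{\{2\},\{1,2\},\{1,2,4\}\}$, $C^3=\{\{3\},\{1,3\},\{1,3,4\}\}$, and $C^4=\{\{4\},\{2,3,4\}\}$ (so $A=\{4\}$, $B=\{2,3,4\}$). These are disjoint chains satisfying the hypotheses. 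For $C^1$ we have $P_1=\{1\}$, so the only candidates for $W_1$ are $\{1,2\}$ and $\{1,3\}$, but both are already occupied as the forced representatives $P_2$ and $P_3$ of $C^2$ and $C^3$; no admissible $W_1$ exists and your peeling step cannot be performed. The lemma's conclusion nevertheless holds here, e.g.\ via the chains $\{1\}\subset\{1,2\}\subset\{1,2,4\}$, $\{2\}\subset\{2,3\}\subset\{1,2,3\}$, $\{3\}\subset\{1,3\}\subset\{1,3,4\}$, $\{4\}\subset\{2,4\}\subset\{2,3,4\}$ --- note that the new layer-$2$ set $\{2,3\}$ does not lie between the predecessor and the top of any single original chain, and that the bottom of $C^2$ ends up in a chain with the top of $C^1$. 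This is the heart of the matter: a covering by skipless chains in general requires re-routing across the given chains, not just inserting, within each chain separately, a new element below that chain's own top; any per-chain interval formulation of the representative problem is too restrictive, and fixing it means choosing the permutation of tops and the new intermediate sets simultaneously, which is essentially the full difficulty of the lemma.

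For comparison, the paper handles precisely this difficulty by a different route: it first proves a three-consecutive-layer version (Lemma \ref{lem:complete_chains}), whose engine is a counting lemma (Lemma \ref{lem:size_Q}) showing there are at least $m$ usable sets in the intermediate layer, combined with Menger's theorem to extract $m$ vertex-disjoint paths --- this is where tops and bottoms of different chains get re-matched. The general statement is then proved by induction on $m$ (not on $r-s$): one first makes the first $m-1$ chains skipless by induction, and then repeatedly applies the three-layer lemma to push the special chain containing $A$ down one layer at a time (the ``$t$-good'' decompositions). So your outer induction and reattachment bookkeeping are fine in spirit, but the missing ingredient --- an argument that lets the new intermediate elements serve chains other than the one they sit under --- is the actual content of the result, and as written your proposal does not supply it.
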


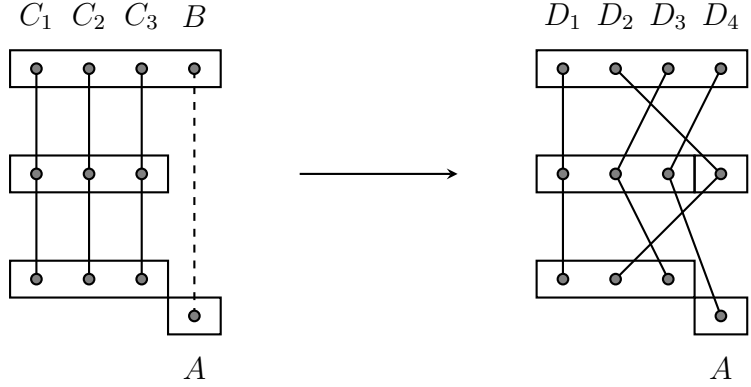
\begin{figure}
    \centering
    \tikzstyle{vertex}=[circle, draw, fill=black!50,
                        inner sep=0pt, minimum width=4pt]

\begin{tikzpicture}[thick,scale = 0.7]
      \foreach \pos in {0,1,2} {
      \draw (\pos,0) -- (\pos,4); 
      }
      \draw[dashed] (3,-0.7) -- (3,4);
      
      \draw (10,0) -- (10,4);
      \draw (11,0) -- (13,2) -- (11,4);
      \draw (12,0) -- (11,2) -- (12,4);
      \draw (13,-0.7) -- (12,2) -- (13,4);

      \foreach \shift in {0,10} {
      \foreach \pos in {0,1,2} {
      \node[vertex] at (\pos+\shift,0) {};
      };
      \draw[draw=black] (-0.5+\shift,-0.35) rectangle ++(3,0.7);
      
      \node[vertex] at (3+\shift,-0.7) {};
      \draw[draw=black] (2.5+\shift,-1.05) rectangle ++(1,0.7);
      
      \foreach \pos in {0,1,2} {
      \node[vertex] at (\pos+\shift,2) {};
      };
      \draw[draw=black] (-0.5+\shift,-0.35+2) rectangle ++(3,0.7);
      
      \foreach \pos in {0,1,2,3} {
      \node[vertex] at (\pos+\shift,4) {};
      };
      \draw[draw=black] (-0.5+\shift,-0.35+4) rectangle ++(4,0.7);
      };
      
      \node[vertex] at (3+10,2) {};
      \draw[draw=black] (-0.5+13,-0.35+2) rectangle ++(1,0.7);
      
     \draw [-stealth] (5,2) -- ++(3,0);
     
     \node at (0,5) {$C_1$};
     \node at (1,5) {$C_2$};
     \node at (2,5) {$C_3$};
     \node at (3,5) {$B$};
     \node at (3,-1.7) {$A$};
     
     \node at (10,5) {$D_1$};
     \node at (11,5) {$D_2$};
     \node at (12,5) {$D_3$};
     \node at (13,5) {$D_4$};
     \node at (13,-1.7) {$A$};
\end{tikzpicture}
    \caption{Representation of \cref{lem:multi_complete_chains} (case $r = s+2$ and $m=3$).}
    \label{fig:chain}
\end{figure}
Note that the lemma allows the element $A$ to appear on a lower layer than the others (illustrated in Figure \ref{fig:chain}) and that it may be impossible to add an element on layer $s$ to the chain $D^m$.

The overall structure of the proof of Lemma \ref{lem:multi_complete_chains} is very similar to that of Lehman-Ron \cite{lehman2001disjoint}.
We first consider the special case in which $s=r-2$. As in the proof of Lehman-Ron \cite{lehman2001disjoint}, the first step is to show that there are at least $m$ elements in the $(r-1)$th layer that could be elements of the chains $D_1,\ldots,D_m$.
\begin{lemma}
    \label{lem:size_Q}
    Let $a,r,n$ be integers satisfying  $a \leq r-2 \leq n-2$, let $\R \subseteq {[n] \choose r}$ be of size $m$, let $\S \subseteq {[n] \choose r-2}$ be of size $m-1$ and let $A \in {[n] \choose a}$. Suppose that there exists a bijection $f: \R \to \S \cup \{A\}$ with $f(X) \subseteq X$ for all $X \in \R$. \\
    Let $\Q$ denote the set of $Q\in {[n] \choose r-1}$ with {$S \subseteq Q \subseteq R$} for some {$(S,R) \in (\S \cup \{A\}) \times \R$}. Then $|\Q| \geq m$.
\end{lemma}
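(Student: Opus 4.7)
The plan is to establish $|\Q|\geq m$ via Hall's matching theorem applied to a suitable bipartite graph. I would consider the bipartite graph $H$ with parts $\R$ and $\Q$ where $R\sim Q$ iff $Q\subseteq R$. A matching of $H$ saturating $\R$ directly yields $m$ distinct elements of $\Q$, so the task reduces to verifying Hall's condition: $|N_H(\R^*)|\geq |\R^*|$ for every $\R^*\subseteq \R$.

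Fix $\R^*\subseteq \R$ and let $R_0=f^{-1}(A)$ be the unique preimage of $A$. If $R_0\notin \R^*$, then every pair $(f(R),R)_{R\in \R^*}$ has layer type $(r-2,r)$, and the Lehman-Ron theorem directly yields $|\R^*|$ disjoint skipless chains whose middle vertices are $|\R^*|$ distinct elements of $\Q$ adjacent to vertices of $\R^*$. If instead $R_0\in \R^*$, my strategy is to upgrade $A$ to some $A'\in \binom{R_0}{r-2}$ with $A\subseteq A'$ and $A'\notin f(\R^*\setminus\{R_0\})$. When such $A'$ exists, applying Lehman-Ron to the pairs $(f(R),R)_{R\in \R^*\setminus\{R_0\}}$ together with $(A',R_0)$ produces $|\R^*|$ disjoint skipless chains; each middle vertex lies in $\Q$, because the one coming from $(A',R_0)$ contains $A'\supseteq A\in \S\cup\{A\}$.

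The main obstacle is the sub-case where no such $A'$ exists, i.e., every one of the $\binom{r-a}{r-2-a}$ candidate upgrades of $A$ inside $R_0$ already belongs to $f(\R^*\setminus\{R_0\})\subseteq \S$. This forces a very rigid configuration: there are at least $1+\binom{r-a}{r-2-a}$ members of $\R^*$ that contain $A$, and $\binom{r-a}{r-2-a}$ of the corresponding $f$-values are $(r-2)$-subsets of $R_0$ above $A$. I would resolve this by performing a cycle-swap on $f$: pick one such $A'=f(R_1)$ and replace the pair $(A,R_0)$ by $(A,R_1)$ (using $f'(R_0)=A'$, $f'(R_1)=A$, both of which respect the subset condition), then recurse with the new ``problematic'' element $R_1$. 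Termination would follow since each swap pins down a new element of $\R^*$ via a finite descent argument. As a backup strategy, the dense structure in $\S\cap \binom{R_0}{r-2}$ could instead be used to exhibit the missing vertex of $N_H(\R^*)$ directly, by working in the quotient lattice $2^{[n]\setminus A}$ and invoking Kruskal–Katona or Lemma~\ref{lem:colex_best} to bound the shadow of the $A$-containing members of $\R^*$.

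I expect that formalising this cycle-swap termination — or equivalently showing that one of the two alternative strategies always succeeds uniformly across all ranges of the parameters $a,r,|\R^*|$ — is the key technical step.
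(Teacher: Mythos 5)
Your reduction to Hall's theorem is fine as far as it goes: in the case $R_0=f^{-1}(A)\notin\R^*$, or when a valid ``upgrade'' $A'$ exists, applying Lehman--Ron to the pairs and reading off the middle vertices of the skipless chains does verify $|N_H(\R^*)|\geq|\R^*|$. But the entire difficulty of the lemma (its extra generality over Lehman--Ron, namely the low element $A$) is concentrated in the sub-case you leave open, and the cycle-swap you propose for it does not work. The swap replaces $f$ by $f'$ with $f'(R_0)=A'$, $f'(R_1)=A$, but the set $\S^*=f(\R^*\setminus\{R_0\})$ is unchanged by the swap (it equals $f'(\R^*\setminus\{R_1\})$), so the ``stuck'' condition at the new problematic element is measured against exactly the same family; in particular the old element $R_0$ remains an eligible swap partner and the process can cycle, so the claimed descent (``each swap pins down a new element'') is false. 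Worse, the process can be stuck at \emph{every} reachable element: take $n\geq 5$, $r=3$, $a=0$, $A=\emptyset$, $\S=\{\{1\},\dots,\{5\}\}$, $\R=\{R_0,\dots,R_5\}$ with $R_0=\{1,2,3\}$, $R_1=\{1,2,4\}$, $R_2=\{2,3,4\}$, $R_3=\{3,4,5\}$, $R_4=\{1,4,5\}$, $R_5=\{2,4,5\}$, and $f(R_0)=\emptyset$, $f(R_i)=\{i\}$. Here every $1$-subset of every member of $\R$ lies in $\S$, so for $\R^*=\R$ no upgrade $A'$ ever exists, no matter how many swaps are performed; yet the conclusion of the lemma (and Hall's condition) holds, since the $2$-shadow of $\R$ has $10\geq 6$ elements. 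So your first strategy genuinely fails on this configuration, and the backup (Kruskal--Katona in the quotient above $A$) is only a one-sentence suggestion, not an argument. Note also that Hall's condition for a subfamily $\R^*$ is essentially the lemma applied to $(\R^*,f(\R^*))$, so unless the bad case is resolved by tools external to the lemma, the approach risks being circular.

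For comparison, the paper does not invoke the Lehman--Ron theorem at all: it takes a minimal counterexample with $|\Q|\leq m-1$, builds a modified Hasse diagram on $\R\cup\Q\cup\S\cup\{A\}$ (with $A$ joined directly to the $Q\in\Q$ above it), proves the degree inequalities $d^-(R)\geq d^-(Q)$ for $Q\in N^-(R)$ and $d^+(Q)<d^+(S)$ for $S\in N^-(Q)$ via explicit injections, and derives a contradiction by double counting the edge sums --- i.e.\ it generalises Lehman and Ron's original counting argument rather than using their theorem as a black box. Your plan would need a genuinely new idea for the case where all $(r-2)$-sets between $A$ and the relevant members of $\R^*$ are already occupied by $\S$; as it stands, that is a real gap, not a formalisation detail.
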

\begin{proof}
    We prove the claim by contradiction. Consider a counterexample to the claim for which $m$ is minimal. If $m=1$, then we are given elements $A \subseteq R$ with $|A|\leq r-2$ and $|R|=r$. Then there exists at least one element $Q \in \Q$ such that $A \subseteq Q \subseteq R$: simply remove one of the elements in $R \setminus A$ from $R$ to obtain $Q$. We therefore assume $m\geq 2$.

    We consider the Hasse diagram $H = (V,E)$ of $2^{[n]}$. Note that $\Q$ can be seen as the set of all elements of cardinality $r-1$ lying on a path between an element of $\S \cup \{A\}$ and an element of $\R$.

    We consider the `restriction' $H'=(V',E')$ which is obtained by taking the subgraph of $H$ on vertex set $V'=\R\cup \S \cup \Q \cup \{A\}$, removing all arcs containing $A$ and then adding an arc from $A$ to $Q$ for all $Q \in \Q$ with $A \subseteq Q$. We denote by $N^{+}(X)$ (resp. $N^{-}(X)$) the set of vertices $Y$ with an arc $X \to Y$ (resp. with an arc $Y \to X$) in $H'$, and define $d^{+}(X)=|N^{+}(Y)|$ and $d^{-}(X)=|N^{-}(X)|$.
    We first prove the following three claims.
    \begin{claim}
        \label{claim:in_neighbour}
        For every $R \in \R$ and every $Q \in N^{-}(R)$, we have $d^{-}(R) \geq d^{-}(Q).$
    \end{claim}
    \begin{proof}
        In order to prove the claim, for any $R$ in $\R$, and any $Q$ in $N^{-}(R)$, we exhibit an injective function $\pi : N^{-}(Q) \to N^{-}(R)$.

        We denote by $j$ the unique element of the set $R\setminus Q$. For $S \in \S \cap N^{-}(Q)$, we denote by $i$ the unique element in $Q \setminus S$ and set $\pi(S)=R \setminus \{i\}=S \cup \{j\}$. Note that $R \setminus \{i\} \in \Q$ as $S,R\setminus \{i\},R$ is a path in $H'$. By doing so, we specified a unique $\pi(S)\in N^{-}(R)$ for all $S\in N^{-}(Q)$ except for possibly $A$ if $A\in N^{-}(Q)$. However, there is one element in $N^{-}(R)$ that we have not yet used: the element $R\setminus \{j\}\in N^{-}(R)$ and we may map this element to $A$ to finish the definition of our injection $\pi$ if needed.
    \end{proof}

    \begin{claim}
        \label{claim:out_neighbour}
        For every $Q \in \Q$ and every $S \in N^-(Q)$, we have $d^+(Q) < d^+(S)$.
    \end{claim}

    \begin{proof}
        The proof of this claim is similar to the proof of the previous claim. Let $Q \in \Q$ and $S \in N^-(Q)$. Once again we exhibit an injective function $\pi' : N^+(Q) \rightarrow N^+(S)$.
        We define $\pi'(R) = S \cup (R \setminus Q)$ for $R \in N^{+}(Q)$. Note that $Q$ itself is never an image of $\pi'$ thus a strict inequality holds.
    \end{proof}

    \begin{claim}
        \label{claim:sum_neighbour}
        $$
            \sum_{R \in \R} d^{-}(R) \geq \sum_{Q \in \Q} d^{-}(Q) \quad \text{ and } \quad
            \sum_{Q \in \Q} d^{+}(Q)  < \sum_{S \in \S\cup \{A\}} d^{+}(S).
        $$
    \end{claim}

    \begin{proof}
        We start by showing the first inequality.
        For $c \in \mathbb{N}$, let us define $\R_c = \{R \in \R \mid d^{-}(R) = c\}$ and distinguish two cases. Suppose first that there exists a $c \in \mathbb{N}$ such that $\R_c = \R$. Then we have $\sum_{R \in \R} d^{-}(R) = cm$. By Claim \ref{claim:in_neighbour}, $\forall Q \in \Q$, $d^{-}(Q) \leq c$ and therefore $\sum_{Q \in \Q} d^{-}(Q) \leq c(m-1) \leq \sum_{R \in \R} d^{-}(R)$.

        Otherwise, $\R_c \neq \R$ for every choice of $c$. In this case, we define, for any integer $d < \max_{R\in \R} d^-(R)$, {$\R_{\leq d} = \cup_{c \leq d} ~ \R_c$} and remark that $\R_{\leq d} \neq \R$.
        Since we chose $(\R,\S \cup \{A\})$ to be minimal, Lemma \ref{lem:size_Q} holds for the pair $(\R_{\leq d},f(\R_{\leq d}))$. In particular, we can find a set $\Q_{\leq d}$ of size exactly $|\R_{\leq d}|$ such that $\Q_{\leq d} \subseteq \Q$ and every element in $\Q_{\leq d}$ lies on a path between an element of $\R_{\leq d}$ and an element of $f(\R_{\leq d})$. By definition, each $Q\in \Q_{\leq d}$ is in the in-neighbourhood of some $R\in \R_{\leq d}$, and therefore $d^-(Q)\leq d$ by Claim \ref{claim:in_neighbour}. We conclude that for any $d < \max_{R\in \R} d^-(R) $ there exist $|\Q_{\leq d}| = |\R_{\leq d}|$ vertices in $\Q$ of in degree at most $d$.

        If we denote by $d_0<d_1,\ldots<d_k$ the in-degree sequence of $\R$, then the result of the last paragraph induces an injective function $\pi'':\R_{\leq d_{k-1}}\to \Q$ as follows: we map $\R_{d_0}$ to $\Q_{\leq d_0}$, then map $\R_{d_1}$ to $\Q_{\leq d_1} \setminus \Q_{\leq d_0}$ and continue to map $\R_{d_i}$ to $\Q_{\leq d_i} \setminus \Q_{\leq d_{i-1}}$ for all $i\in [2,k-1]$. We argued in the previous paragraph that such injections exist.
        By construction, $\forall R \in \R_{\leq d_{k-1}}, ~ d^-(\pi''(R)) \leq d^-(R)$.

        All vertices in $\Q$ are in the in-neighbourhood of some element of $\R$ and therefore $d^-(Q)\leq d_k$ for all $Q\in \Q$ by Claim \ref{claim:in_neighbour}. Since by assumption $|\Q|<|\R|$, this proves  $\sum_{R \in \R} d^{-}(R) \geq \sum_{Q \in \Q} d^{-}(Q)$ since we can associate each term in the second sum to an element that is at least as large in the first sum (and all terms are non-negative).

        The proof of the inequality $\sum_{Q \in \Q} d^{+}(Q)  < \sum_{S \in \S\cup \{A\}} d^{+}(S)$ is analogous, but now the strict inequality follows from the strict inequality in Claim \ref{claim:out_neighbour} instead of the weak inequality of Claim \ref{claim:in_neighbour}.
    \end{proof}

    We are now fully equipped to conclude the proof of Lemma \ref{claim:out_neighbour}. By double counting, $\sum_{Q \in \Q} d^{-}(Q)  = \sum_{S \in \S \cup \{A\}} d^{+}(S)$ and $\sum_{R \in \R} d^{-}(R)  = \sum_{Q \in \Q} d^{+}(Q)$. Using Claim \ref{claim:sum_neighbour} we deduce the following contradiction,
    $$
        \sum_{Q \in \Q} d^{-}(Q) = \sum_{S \in \S\cup \{A\}} d^{+}(S) > \sum_{Q \in \Q} d^{+}(Q) = \sum_{R \in \R} d^{-}(R) \geq \sum_{Q \in \Q} d^{-}(Q).
    $$
    This proves the lemma.
\end{proof}
Using Lemma \ref{lem:size_Q} we can now prove the following special case of Lemma \ref{lem:multi_complete_chains}, which we will use to push through an inductive argument.
\begin{lemma}
    \label{lem:complete_chains}
    Let $3\leq r\leq n$ be integers. Let $C^1,\dots,C^{m-1}$ be skipless disjoint chains between the $(r-2)$th and the $r$th layers. Let $B\in \binom{[n]}{r}$ and let $A$ be a subset of $B$ of size at most $r-2$, such that $A,B \notin \cup_{i=1}^{m-1} C^i$.

    Then there exist $m$ disjoint chains $D^1,\dots,D^{m}$ with the following three properties.
    \begin{itemize}
        \item For $i\in [m-1]$, the chain $D^i$ starts in the $(r-2)$th layer, ends in the $r$th layer and is skipless.
        \item The chain $D^{m}$ starts in $A$ and intersects both the $(r-1)$th and the $r$th layer.
        \item The chains $D^1,\dots,D^{m}$ cover the elements in $C^1,\dots,C^{m-1}$ and $A,B$.
    \end{itemize}
\end{lemma}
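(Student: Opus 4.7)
The plan is to reduce the problem to a Hall-type matching problem on layer $r-1$. First I set up the natural matching target using \cref{lem:size_Q}, then verify Hall's condition via a second application of \cref{lem:size_Q}, and finally (the subtle step) use an augmenting-path argument to ensure the chosen matching preserves the old middle elements $M_i$.

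Write $C^i = \{A_i, M_i, B_i\}$ with $A_i, M_i, B_i$ on layers $r-2, r-1, r$ respectively. Let $\R = \{B_1,\ldots,B_{m-1},B\}$ and $\S = \{A_1,\ldots,A_{m-1}\}$, and define the bijection $f\colon \R \to \S \cup \{A\}$ by $f(B_i)=A_i$ and $f(B)=A$; hypothesis gives $f(X) \subseteq X$. Applying \cref{lem:size_Q} produces the set $\Q \subseteq \binom{[n]}{r-1}$ of sets sandwiched between some pair in $(\S \cup \{A\}) \times \R$, of cardinality at least $m$. Build the bipartite graph $H$ on $\R \cup \Q$ in which $R \sim Q$ iff $f(R) \subseteq Q \subseteq R$. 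To verify Hall's condition I fix $\R' \subseteq \R$: if $B \in \R'$, the triple $(\R', f(\R') \setminus \{A\}, A)$ satisfies the hypotheses of \cref{lem:size_Q}, so $|N_H(\R')| \geq |\R'|$; if $B \notin \R'$, the distinct elements $\{M_i : B_i \in \R'\}$ already lie in $N_H(\R')$. Hence a perfect matching from $\R$ into $\Q$ exists.

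The main obstacle is that such a matching need not cover the old middle-layer elements $M_i$, which the lemma requires the new chains to contain. To handle this I note that $\{M_i \leftrightarrow B_i\}_{i \in [m-1]}$ is itself a matching of size $m-1$ in $H$, and since a matching of size $m$ exists, there is an augmenting path in $H$ starting at the unmatched vertex $B$ and ending at some previously unmatched $Q^* \in \Q$. Augmentation only adds to the set of matched vertices, so every $M_i$ remains matched; I denote the resulting perfect matching by $R \leftrightarrow Q_R$.

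Finally, set $D^i = \{A_i, Q_{B_i}, B_i\}$ for $i \in [m-1]$ and $D^m = \{A, Q_B, B\}$. The first $m-1$ chains are skipless on layers $r-2, r-1, r$ and $D^m$ starts at $A$ and meets layers $r-1$ and $r$. Disjointness follows from the distinctness of the $B_i$'s together with $B$, of the $A_i$'s together with $A$ (using $A \notin \bigcup_i C^i$), and of the $Q_R$'s (from the matching). Each $M_i$ equals some $Q_R$ and is therefore contained in one of the $D^j$, giving $\bigcup_j D^j \supseteq \bigcup_i C^i \cup \{A, B\}$, as required.
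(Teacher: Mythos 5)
Your plan reduces the lemma to a bipartite matching between the top layer $\R=\{B_1,\dots,B_{m-1},B\}$ and the middle layer in which $R$ may only be matched to a $Q$ with $f(R)\subseteq Q\subseteq R$, i.e.\ every bottom stays paired with its original top and $A$ is linked to $B$ itself. Two things go wrong. First, the Hall verification misuses \cref{lem:size_Q}: that lemma lower-bounds the number of $Q$ sandwiched between \emph{some} pair $(S,R)$ with $S\in f(\R')$ and $R\in\R'$, where $S$ need not be $f(R)$; this ``crossed'' set can be strictly larger than $N_H(\R')$ in your restrictive graph, so $|N_H(\R')|\geq|\R'|$ does not follow. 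Second, and more seriously, the stronger statement you are trying to prove is simply false, so no repair of the Hall step can save the construction. Take $r=4$, $n\geq 8$, and (writing $1236$ for $\{1,2,3,6\}$, etc.) the eight disjoint skipless chains $(A_i,M_i,B_i)$ given by $(13,123,1236)$, $(36,136,1356)$, $(35,356,2356)$, $(23,235,1235)$, $(14,124,1248)$, $(48,148,1478)$, $(47,478,2478)$, $(24,247,1247)$, together with $A=12$ and $B=1234$ (neither appears in the chains). The middle sets available to $B$ in your graph are only $123$ and $124$, and those available to each $B_i$ are the two $(r-1)$-sets between $A_i$ and $B_i$; the union of all these candidate middles is the $8$-element set $\{123,136,356,235,124,148,478,247\}$, so the nine tops cannot receive distinct middles and Hall fails for $\R'=\R$. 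Hence no family of chains of your prescribed form $\{A_i,Q_{B_i},B_i\}$, $\{A,Q_B,B\}$ exists for this input.

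The lemma itself of course still holds for this input, but only because its conclusion allows the pairing to change: for instance one may take $D^m=\{12,126,1236\}$, replace the first chain by $\{13,123,1234\}$, and keep the remaining chains, so that $A$ is routed up to $B_1$ while $B$ is picked up by a re-paired chain. This need to re-route across different bottom--top pairs is exactly the difficulty the paper's proof addresses: it works with $m$ vertex-disjoint paths in the three-layer (adjusted) Hasse diagram, applies Menger's theorem, and runs an induction on $m$ over the chains missed by a hypothetical small cut. A single bipartite matching between the top and middle layers with the bottom pairing frozen cannot capture this, which is why your argument, although the augmenting-path step at the end is fine in isolation, has a genuine gap upstream.
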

\begin{proof}
    We prove the claim by induction on $m$. The case $m=1$ is immediate.

    We let $\R,\T,\S$ denote the restriction of the chains to layers $r,r-1,r-2$ respectively, and add $A$ to $\S$ and $B$ to $\R$. That is,
    \begin{align}
        \R & = \left( \bigcup_i C^i \cap {[n] \choose r}\right) \cup \{B\}, \nonumber    \\
        \T & = \bigcup_i C^i \cap {[n] \choose r-1}, \nonumber                           \\
        \S & = \left( \bigcup_i C^i \cap {[n] \choose r-2} \right) \cup \{A\}. \nonumber
    \end{align}
    Let $\Q$ denote the set of all elements $Q \in {[n] \choose r-1}$ such that there exists $(R,S) \in \R \times \S$ satisfying $R \subseteq Q \subseteq S$. We define a bijection $f:\R\to \S$ with $f(B)=A$ and $f(X)\subseteq X$ for all $X\in \R$ using the given chains.
    Lemma \ref{lem:size_Q} shows that $|\Q|\geq m$. Since $|\T|=m-1$, $\T$ is a strict subset of $\Q$.

    We consider the poset as a directed graph $H'$ via an adjusted Hasse diagram as before: the vertex set consists of $V=\R\cup \Q \cup \S$, and $X \rightarrow Y$ is an arc in $E$ if and only if $X \subseteq Y$ and either $|Y| = |X|+1$ or $X=A$ and $Y\in \Q$. Finding the desired chains $D^1,\dots,D^m$, is equivalent to finding $m$ vertex-disjoint paths between $\R$ and $\S$ in the induced subgraph $H_{Q}=H'[\R \cup \T \cup \S \cup \{Q\}]$ for some $Q\in \Q$. By Menger's theorem \cite{Menger1927}, $m$ vertex-disjoint paths exist if and only if there is no $(\R,\S)$-cut of size $m-1$, that is, there is no subset $\C\subseteq V$ with $|\C|=m-1$ such that any path between any pair $(R,S) \in \R \times \S$ contains a vertex of $\C$.

    Since $|\T|<|\Q|$, there is an element $Q_0\in \Q\setminus \T$.
    By the discussion above, we may assume that an $(\R,\S)$-cut $\C$ of size $m-1$ exists in $H_{Q_0}$.
    We first show that $\C \not \subseteq \Q$. Indeed, for any $Q \in \Q$ there exists a pair $(R,S) \in \R \times \S$ such that $S\rightarrow Q\rightarrow R$ is a path in $H'$. When $\C\subseteq \Q$, all such paths in $H_{Q_0}$ are cut off only when $\C$ contains all elements of $\T\cup\{Q_0\}$; but $|\C| = m-1 < m=|\T \cup \{Q_0\}|$. So $\C$ must contain at least one element which is not in $\Q$.

    We partition the size of the cut in three parts
    $$
        m_1 = |\R \cap \C|, \quad m_2 = |\Q \cap \C|, \quad m_3 = |\S \cap \C|.
    $$
    Consider the chains whose endpoints have not been touched by the cut. That is, let $\R^*\subseteq \R$ consist of the $R\in \R$ for which $R,f(R)\not\in \C$, and let $\S^*=f(\R^*)$. Then $\Q \cap \C$ is an $(\R^*,\S^*)$-cut. Moreover,
    \[
        m_2=|\Q \cap \C|=(m-1)-m_1-m_3<m-m_1-m_3\leq |\R^*|.
    \]
    Let $\T^*\subseteq \T$ consist of the elements that lie on some chain $C^i$ between $\S^*$ and $\R^*$. Since $\Q \cap \C$ is an $(\R^*,\S^*)$-cut of $H_{Q_0}$, it must in particular contain all elements of $\T^*$. Since $m_2 <  |\R^*|$, this means that $(A,B)\in (\S^*\times \R^*)$. Moreover, we may apply the induction hypothesis since $|\R^*| < |\R|$ (because $m_1+m_3>0$). This gives us $|\R^*|$ chains which cover all elements in $\T^*$ and all intersect layer $r-1$, so in particular we obtain some element $Q_1\in \Q\setminus \T^*$ such that there are $|\R^*|> m_2$ vertex-disjoint $\S^*-\R^*$ paths in $H^*=H'[\R^*\cup \T^* \cup \{Q_0,Q_1\}\cup \S^*]$.
    We distinguish two cases.
    \begin{itemize}
        \item Suppose that $Q_1 \notin \T$. In this case we have obtained our desired chain decomposition. Indeed, we keep the chains between $\S\setminus \S^*$ and $\R\setminus\R^*$ as they are and since $\T^*\cup \{Q_1\}$ is disjoint from those chains, we may use the $|\R^*|$ chains between $\R^*$ and $\S^*$ that we obtained by induction in order to define the remaining chains.
        \item Suppose that $Q_1 \in \T$. In that case, $H^*$ is an induced subgraph of $H_{Q_0}$. This gives a contradiction:  $H^*$ has $|\R^*|>m_2$ vertex disjoint paths between $\R^*$ and $\S^*$, whereas $\Q\cap \C$ gives an $(\R^*,\S^*)$-cut of size $m_2$ in $H_{Q_0}$. \qedhere
    \end{itemize}
\end{proof}
From this, we will deduce the case of general $s$.
\begin{proof}[Proof of Lemma \ref{lem:multi_complete_chains}]
    We prove the lemma by induction on $m$. The case $m=1$ is immediate. Suppose the claim has been shown for all $m'<m$.

    Let $C^1,\dots,C^m$ be the given chain decomposition, where $C^m$ starts in $A\in \binom{[n]}{\leq s}$ and ends in $B\in \binom{[n]}r$, and the first $m-1$ chains are between layers $s$ and $r$.
    Let $t \in [s+1,r]$. We say the chains $D^1,\dots,D^m$ are $t$-good if the first $m-1$ chains are skipless and between layers $s$ and $r$, $D^m$ is between $A$ and $B$ and intersects layers $t,\dots, r$, and $\cup_{i=1}^{m} C^i\subseteq \cup_{i=1}^{m} D^i$.

    We first argue that there exists an $r$-good decomposition.
    Indeed, applying the induction hypothesis to the first $m'=m-1$ chains, we can find chains $D^1,\dots,D^{m-1}$ between layers $s$ and $r$ that are skipless and such that $\cup_{i=1}^{m-1} C^i\subseteq \cup_{i=1}^{m-1} D^i$. By removing the elements from $C^{m}$ that also appear in some $D^i$, we have obtained an $r$-good decomposition for $C^1,\dots,C^m$.

    Let $t\leq r$ be minimal for which a $t$-good decomposition $D^1,\dots,D^m$ exists.
    Suppose towards a contradiction that $t>s+1$.
    Let $B'$ be the element of $D^m$ in layer $t$. Since $t>s+1$, we find $t-2\geq s$ and so the chains $D^1,\dots,D^{m-1}$ all intersect layer $t-2$. We can apply \cref{lem:complete_chains} on the chains $D^1,\dots,D^{m-1}$ restricted to layers $s'=t-2$ and $r'=t$, and elements $A$ and $B'$. This produces  a set $\mathcal{C}_1$ of chains. Let $\mathcal{C}_0$ and $\mathcal{C}_2$ be the restrictions of $D^1,\dots,D^{m}$ to layers $s,\dots,t-2$ and to layers $t,\dots,r$ respectively. Then each chain of $\mathcal{C}_1$ shares a vertex with exactly one chain of $\mathcal{C}_0$ and exactly one chain of $\mathcal{C}_2$. Hence, there is only one way to merge these chains in a chain decomposition $E^1,\dots,E^m$. This chain decomposition is $(t-1)$-good, contradicting the minimality of $t$. Therefore, there exists an $(s+1)$-good decomposition $D^1,\dots,D^m$, as claimed by the lemma.
\end{proof}

We will obtain \cref{thm:gener} as a corollary of the following lemma. The lemma is stated in the way that we wish to apply it in the proof of \cref{thm:main}.
\begin{lemma}
    \label{lem:main}
    Let $\F\subsetneq 2^{[n]}$ be $k$-antichain saturated. Then $\F$ has a chain decomposition into $k-1$ skipless chains.
\end{lemma}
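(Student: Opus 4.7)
The plan is to derive this essentially as a corollary of \cref{thm:gener} together with Dilworth's theorem, with one extra step to rule out that the skipless cover picks up sets outside $\F$.

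First I would observe that since $\F \subsetneq 2^{[n]}$ is $k$-antichain saturated, the maximum antichain in $\F$ has size \emph{exactly} $k-1$. One direction is the definition: $\F$ contains no antichain of size $k$. For the other direction, pick any $X \in 2^{[n]}\setminus \F$; by saturation, $\F \cup \{X\}$ contains an antichain $\mathcal{A}$ of size $k$, and $\mathcal{A}$ must contain $X$ (otherwise $\mathcal{A} \subseteq \F$ contradicts the first property), so $\mathcal{A}\setminus\{X\}$ is an antichain of size $k-1$ in $\F$. Applying Dilworth's theorem, this gives a chain decomposition of $\F$ into exactly $k-1$ chains.

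Next I would apply \cref{thm:gener} with $m = k-1$ to obtain disjoint skipless chains $C^1,\dots,C^{k-1}$ with $\F \subseteq \bigcup_{i=1}^{k-1} C^i$. The only thing left to show is that this containment is an equality, i.e.\ that the skipless chains do not introduce extra sets.

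Suppose for contradiction that there exists $X \in \bigl(\bigcup_{i=1}^{k-1}C^i\bigr) \setminus \F$. Then $\F \cup \{X\}$ is still contained in $\bigcup_{i=1}^{k-1} C^i$, which is the disjoint union of $k-1$ chains, so by Dilworth $\F \cup \{X\}$ contains no antichain of size $k$. This contradicts saturation, since adding $X$ to $\F$ was supposed to create an antichain of size $k$. Hence $\F = \bigcup_{i=1}^{k-1} C^i$, and the $C^i$ form the desired skipless chain decomposition. There is no real obstacle; the work has already been done in \cref{thm:gener}, and the saturation hypothesis is exactly what forces the skipless cover to coincide with $\F$.
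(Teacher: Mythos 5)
Your argument is circular within this paper's logical structure. You derive \cref{lem:main} from \cref{thm:gener}, but the paper's proof of \cref{thm:gener} (Section~\ref{sec:gener}) consists precisely of greedily saturating the given family and then invoking \cref{lem:main}: the lemma is the workhorse and the theorem is its corollary, not the other way around. So unless you supply an independent proof of \cref{thm:gener} --- which is where all the real difficulty lies, via \cref{lem:multi_complete_chains} and the Lehman--Ron-style counting in \cref{lem:size_Q} --- your proposal assumes (a strengthening of) what it sets out to prove. The steps you do carry out are fine as far as they go: a saturated $\F \subsetneq 2^{[n]}$ has maximum antichain size exactly $k-1$, hence by Dilworth a decomposition into $k-1$ chains, and any cover of $\F$ by $k-1$ disjoint chains cannot contain a set outside $\F$ without violating saturation. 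Indeed that last observation is exactly the mechanism the paper uses, only in the opposite direction (inside the proof of \cref{lem:main}, to force the rerouted set $X$ to lie in $\F$, and again when deducing \cref{thm:gener} from \cref{lem:main}).

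What is missing is the actual construction of the skipless chains. The paper proves \cref{lem:main} directly from \cref{lem:multi_complete_chains}: among all decompositions of $\F$ into $k-1$ chains whose first $i$ chains are skipless, choose one minimising the number of layers skipped by the $(i+1)$th chain, apply \cref{lem:multi_complete_chains} locally across a skipped layer, and use saturation to conclude that the single new set created by the rerouting must already belong to $\F$, contradicting minimality. If you want to keep your top-down structure, you would first have to prove \cref{thm:gener} without appealing to \cref{lem:main} (for instance by running an analogous extremal rerouting argument for arbitrary chain covers); as written, the proposal does not engage with this at all.
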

\begin{proof}
    Suppose, towards a contradiction, that $\F$ has no chain decomposition $C^1,\dots,C^{k-1}$ for which the first $i+1$ chains are skipless, but it does have one for which the first $i$ are skipless. By \cref{lem:multi_complete_chains} applied to a single chain, we can always rearrange the chains such that $C^1$ is skipless. This means we have $1\leq i<k-1$.

    Amongst the decompositions for which the first $i$ chains are skipless, we choose a decomposition $C^1,\dots,C^{k-1}$ which minimises the `number of layers the $(i+1)$th chain skips'. That is, the decomposition which minimises
    \[
        \max_{X\in C^{i+1}}|X|-\min_{Y\in C^{i+1}} |Y|+1-|C^{i+1}|.
    \]
    By assumption, we can find $A\subseteq B$ consecutive in $C^{i+1}$ with $|B|>|A|+1$ such that $C^{i+1}$ is skipless between $B$ and its maximal element. After renumbering, we can assume that for some $j\in [0,i]$, the chains $C^1,\dots,C^j$ have elements present on layers $|B|-2,|B|-1$ and $|B|$, whereas $C^{j+1},\dots,C^{i}$ miss an element either on layer $|B|-2$ or on layer $|B|$. (Here we use that $C^1,\dots,C^i$ are skipless.) In particular, if $C^a$ where $a \in [j+1, i]$ has an element on layer $|B|-1$, then it is its minimal or maximal element, and so we can move it to another chain without creating any skips in the chain $C^a$.

    We apply Lemma \ref{lem:multi_complete_chains} to the chains $C^1,\dots,C^j$ restricted to layers $|B|-2,|B|-1,|B|$,
    and $A\subseteq B$ to obtain disjoint chains $D^1,\dots,D^{j+1}$ with the following properties:
    \begin{itemize}
        \item $\cup_{a=1}^{j}C^a\cup \{A,B\}\subseteq \cup_{a=1}^{j+1}D^a$;
        \item $D^1,\dots,D^{j}$ are skipless, start in layer $|B|-2$ and end in layer $|B|$;
        \item $D^{j+1}$ contains $A$ and elements on layers $|B|-1$ and $|B|$.
    \end{itemize}
    Since the chains $C^1,\dots,C^j$ have an element on layer $|B|-1$,  there is a unique $X\in \cup_{a=1}^{j+1}D^a$ with $|X|=|B|-1$ such that $X\not\in \cup_{a=1}^jC^a$.

    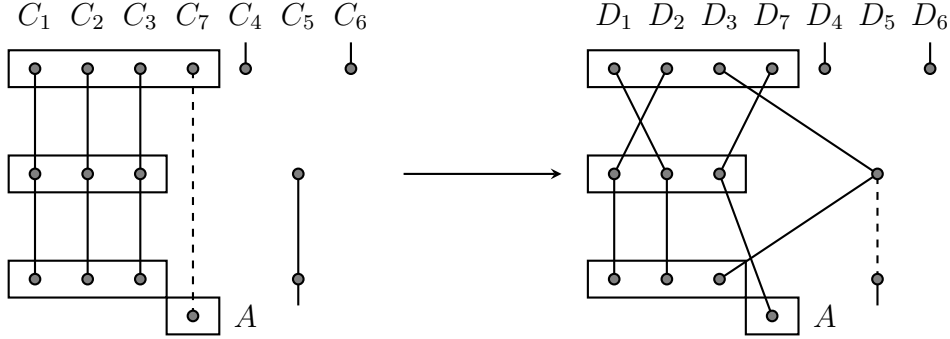
\begin{figure}
        \centering
        \tikzstyle{vertex}=[circle, draw, fill=black!50,
                        inner sep=0pt, minimum width=4pt]

\begin{tikzpicture}[thick,scale = 0.7]
      \foreach \pos in {0,1,2} {
      \draw (\pos,0) -- (\pos,4); 
      }
      \draw[dashed] (3,-0.7) -- (3,4);
      
      \foreach \shift in {11} {
     \draw (\shift,0) -- (\shift,2) -- (\shift+1,4);
     \draw (\shift+1,0) -- (\shift+1,2) -- (\shift,4);
     \draw (\shift+2,0) -- (\shift+5,2) -- (\shift+2,4);
     \draw (\shift+3,-0.7) -- (\shift+2,2) -- (\shift+3,4);
     \node at (\shift,5) {$D_1$};
     \node at (\shift+1,5) {$D_2$};
     \node at (\shift+2,5) {$D_3$};
     \node at (\shift+3,5) {$D_7$};
     \node at (\shift+4,5) {$D_4$};
     \node at (\shift+5,5) {$D_5$};
     \node at (\shift+6,5) {$D_6$};
     \node at (\shift+4,-.7) {$A$};
     \draw (5+\shift,-0.5) -- (5+\shift,0);
     \draw[dashed] (5+\shift,0) -- (5+\shift,2);
     \draw (4+\shift,4) -- (4+\shift,4.5);
     \draw (6+\shift,4) -- (6+\shift,4.5);
      }
      
     \draw (5,-0.5) -- (5,2);
     \draw (4,4) -- (4,4.5);
     \draw (6,4) -- (6,4.5);

      \foreach \shift in {0,11} {
      \foreach \pos in {0,1,2} {
      \node[vertex] at (\pos+\shift,0) {};
      };
      \draw[draw=black] (-0.5+\shift,-0.35) rectangle ++(3,0.7);
      
      \node[vertex] at (3+\shift,-0.7) {};
      \draw[draw=black] (2.5+\shift,-1.05) rectangle ++(1,0.7);
      
      \foreach \pos in {0,1,2,5} {
      \node[vertex] at (\pos+\shift,2) {};
      };
      \draw[draw=black] (-0.5+\shift,-0.35+2) rectangle ++(3,0.7);
      
      \foreach \pos in {0,1,2,3,4,6} {
      \node[vertex] at (\pos+\shift,4) {};
      };
      \draw[draw=black] (-0.5+\shift,-0.35+4) rectangle ++(4,0.7);
      \node[vertex] at (\shift+5,0) {};
      };
     \draw [-stealth] (7,2) -- ++(3,0);
     
     \node at (0,5) {$C_1$};
     \node at (1,5) {$C_2$};
     \node at (2,5) {$C_3$};
     \node at (3,5) {$C_7$};
     \node at (4,5) {$C_4$};
     \node at (5,5) {$C_5$};
     \node at (6,5) {$C_6$};
     
     \node at (4,-.7) {$A$};
\end{tikzpicture}
        \caption{An example of a possible rearrangement as done in the proof of \cref{lem:main} (for $j = 3$ and $i=6$). The sets $A$ and $B$ are part of the chain $C_7$.}
        \label{fig:rearr}
    \end{figure}
    The chains $D^1,\dots,D^{j+1}$ define a matching $M$ between layer $|B|$ and layer $|B|-1$ of size $j+1$. We will use this to reroute the chains into a `better' chain decomposition and arrive at a contradiction. A possible configuration is depicted in Figure \ref{fig:chain}.
    We define the chain decomposition $E^1,\dots,E^{k-1}$ as follows.

    For $a\in [j]$, let $b\in [j]$ be such that $D^b$ contains the unique element in $C^a$ of size $|B|-2$. Let $a'\in [j]\cup \{i+1\}$ be the index such that $D^b$ contains the unique element in $C^{a'}$ of size $|B|$. We set
    \[
        E^{a}= \left[C^{a}\cap \binom{[n]}{\leq |B|-2}\right] \cup D^b \cup \left[ C^{a'}\cap \binom{[n]}{\geq |B|}\right].
    \]
    Note that our assumption that $C^{1}, \dots, C^{i+1}$ are skipless from layer $|B|$ upwards means the chain $E^a$ must be skipless as well.

    For $a\in [j+1,i]$, we let $E^a=C^a\setminus\{X\}$. Either we kept the chain the same, or we removed the minimal or maximal element, so these chains are also skipless.
    For $a\in [i+2,n]$,  we also set $E^a=C^a\setminus\{X\}$.

    For $a=i+1$, let $C^{a'}$ be the unique  chain which contains the element of $D^{j+1}$ of size $|B|$. We set \[
        E^{i+1}= \left[C^{i+1}\cap \binom{[n]}{\leq |A|}\right] \cup D^{j+1} \cup \left[C^{a'}\cap \binom{[n]}{\geq |B|}\right].
    \]
    The chains $E^1,\dots,E^{k-1}$ form a chain decomposition of $\F\cup \{X\}$ (which must equal $\F$ in this case because $\F$ is $k$-antichain saturated).
    The chains $E^1,\dots,E^i$ are skipless and the chain $E^{i+1}$ skips one fewer layers than the chain $C^{i+1}$, contradicting the optimality of $C^1,\dots,C^{k-1}$.
\end{proof}
We recall the statement of \cref{thm:gener}: if $\F$ admits a chain decomposition into $m$ chains, then it can be covered by $m$ skipless chains.
\begin{proof}[Proof of \cref{thm:gener}]
    By assumption, $\F$ does not contain an antichain of size $m+1$.
    Let $\F'$ be obtained from $\F$ by greedily adding sets until the set system has become $(m+1)$-antichain saturated. If $\F'=2^{[n]}$, then we find a skipless chain decomposition for $\F'$ by \cref{lem:symchain}. Otherwise, we can find a chain decomposition for $\F'$ into $m+1-1=m$ skipless chains by \cref{lem:main}. These chains cover $\F$ as desired.
\end{proof}

\section{Lower bounds for antichain saturation numbers}
\label{sec:proof}
In this section, we prove the lower bound of Theorem \ref{thm:main}. We first recall the set-up.
Given a natural number $k$, let $\ell$ be the smallest integer $j$ such that $\binom{j}{\floor{j/2}} \geq k - 1$. We may assume that $n\geq \ell$.
Let $\C(m,t)$ denote the initial segment of layer $t$ of size $m$ under the colexicographic order.
Let $c_{t} =k-1$ for all $t\in [\floor{\ell/2},\floor{n/2}]$. For $0\leq t<\floor{\ell/2}$, we define $c_t = \nu\left(\C(c_{t+1},t+1)\right)$.

The lower bound of Theorem \ref{thm:main} follows directly from the lemma below, since the desired lower bound for the upper layers follows by symmetry.
\begin{lemma}
    \label{lem:main_lower}
    For any $k$-antichain saturated set system $\F\subsetneq 2^{[n]}$,
    $|\F_t| \geq c_t$ for any $t\leq \lfloor n/2\rfloor$.
\end{lemma}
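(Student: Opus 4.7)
I will argue by downward induction on $t$, from $t = \lfloor n/2 \rfloor$ to $t = 0$, using a skipless chain decomposition of $\F$ into $k-1$ chains $C^1, \dots, C^{k-1}$ supplied by \cref{lem:main}. Because these chains are skipless and pairwise disjoint, $|\F_t|$ equals the number of chains whose span contains layer $t$, so the task reduces to controlling which chains miss which layers.

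The central tool will be a \emph{saturation closure} observation: if $C^i$ has maximum element $Y$, then every strict superset $Z \supsetneq Y$ lies in $\F$, and dually every strict subset of any chain minimum lies in $\F$. Suppose, toward a contradiction, that some $Z \supsetneq Y$ is missing. By saturation, $\F \cup \{Z\}$ hosts an antichain $A$ of size $k$; since $\F$ has no such antichain, $Z \in A$. The other $k-1$ elements of $A$ form an antichain in $\F$ incomparable to $Z$, and because there are exactly $k-1$ chains, pigeonhole places one such element on $C^i$. Yet every element of $C^i$ is $\subseteq Y \subsetneq Z$, hence comparable to $Z$: a contradiction.

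For $t \in [\lfloor \ell/2 \rfloor, \lfloor n/2 \rfloor]$, where $c_t = k-1$, I will prove that every chain meets layer $t$. If some $C^i$ has $|\max C^i| < t$, the closure observation forces $\F$ to contain the entire upper sublattice $\{Z : Z \supseteq \max C^i\}$. Since $C^i$ contributes only $\{\max C^i\}$ to this Boolean cube of size $2^{n - |\max C^i|}$, the remaining $k-2$ chains must cover a set of width $\binom{n - |\max C^i|}{\lfloor (n - |\max C^i|)/2 \rfloor}$; combining $|\max C^i| < \lfloor n/2 \rfloor$ with the defining inequality $\binom{\ell}{\lfloor \ell/2 \rfloor} \ge k-1$ contradicts Dilworth for $n$ sufficiently large. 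An analogous argument applied to the subcube $2^{\min C^i}$ (together, if required, with a rearrangement of the decomposition via \cref{lem:multi_complete_chains} to push the chain minimum downward) handles the dual case $|\min C^i| > t$.

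For $t < \lfloor \ell/2 \rfloor$ I will use the downward inductive hypothesis $|\F_{t+1}| \ge c_{t+1}$ in combination with \cref{thm:KKT} and \cref{lem:colex_best}: these yield a matching from $\F_{t+1}$ to its shadow $\partial \F_{t+1}$ of size at least $\nu(\C(c_{t+1}, t+1)) = c_t$. To conclude $|\F_t| \ge c_t$, it remains to verify that every matched shadow element in fact lies in $\F$. I will show this by contradiction: given a matched pair $(Z,W)$ with $W \notin \F$, I will reorganise the skipless decomposition---again exploiting the flexibility granted by \cref{thm:gener} and \cref{lem:multi_complete_chains}---so that $Z$ becomes the minimum of its chain, after which appending $W$ below $Z$ would produce a $(k-1)$-chain skipless decomposition of $\F \cup \{W\}$, contradicting saturation.

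The main obstacle will be the rerouting arguments in both the middle-layer dual case and the low-layer matching step: simultaneously arranging that every matched $Z$ sits at the minimum of its chain, while keeping all other chains skipless and disjoint, requires a delicate coordinated application of \cref{lem:multi_complete_chains}.
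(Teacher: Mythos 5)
Your skeleton (skipless decomposition from \cref{lem:main}, downward induction, colex bound via \cref{lem:colex_best}) matches the paper's, but the two steps you flag as "the main obstacle" are exactly where the proposal breaks, and the mechanisms you suggest would not fix them. For the middle layers, the dual case fails: if $|\min C^i| > t$ with $t$ as small as $\lfloor \ell/2\rfloor$, the closure observation only puts the cube $2^{\min C^i}$ inside $\F$, whose width is $\binom{|\min C^i|}{\lfloor |\min C^i|/2\rfloor}$; since $|\min C^i|$ may be only $\lfloor \ell/2\rfloor+1$, this is far below $k-1$ and yields no contradiction. The closure/width argument only forces chain minima to have size at most roughly $\ell$, which loses $\Theta(k\log k)$ — precisely the term the exact result is about. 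Nor can you "push the chain minimum downward" with \cref{lem:multi_complete_chains}: the skipless chains form a decomposition of $\F$ itself, so rearranging them can never place a chain element on a layer where $\F$ has too few sets; that lemma reroutes chains to remove skips, it does not create new covered elements.

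For the low layers, the statement you plan to verify — that every shadow element matched by a maximum matching of $\F_{t+1}$ lies in $\F$ — is both stronger than needed and false in general: a maximum matching into $\partial\F_{t+1}$ is free to use sets outside $\F$. What is actually true, and what the paper proves (Claim~\ref{cl:down_nu}), is that the \emph{number} of chains that continue from layer $t+1$ down to layer $t$ equals $\nu(\F_{t+1})$, so $|\F_t|\ge\nu(\F_{t+1})$ without any claim about which shadow sets are used. The proof of that claim is the missing idea in your proposal: superimpose a maximum matching $M$ with the matching $M'$ induced by the chain decomposition; if $|M'|<|M|$ there is an alternating path, and rerouting the chains along it either appends a shadow set $Y\notin\F$ to a chain, or merges two chains and frees one to cover an arbitrary missing set — in both cases giving a $(k-1)$-chain decomposition of $\F\cup\{X\}$ for some $X\notin\F$, contradicting saturation. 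This alternating-path rerouting (not \cref{lem:multi_complete_chains}, which plays no role here) also supplies the middle-layer claim: applied to $\F$ and to the complementary family it shows every chain meets layer $\lfloor n/2\rfloor$, and then $\nu(\C)=|\C|$ for colex segments of size at most $k-1$ on layers above $\lfloor \ell/2\rfloor$ pushes $|\F_t|=k-1$ all the way down to $\lfloor\ell/2\rfloor$, for all $n\ge\ell$ rather than only for $n$ large as in your up-set width argument. Without this ingredient the proposal does not establish the lemma.
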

\begin{proof}
    Suppose that $\F\subsetneq 2^{[n]}$ is $k$-antichain saturated.
    By Lemma \ref{lem:main}, there is a skipless chain decomposition $C^1,\dots,C^{k-1}$ for $\F$. Let $\F_t=\F\cap \binom{[n]}t$.  We define $\mathcal{D}(\mathcal{F}_t)$ as the sets $A\in \F_{t-1}$ for which the chain $C^i$ that contains $A$ also contains an element of $\F_t$.
    The following claim is key to our proof.
    \begin{claim}
        \label{cl:down_nu}
        For all $t\in [n]$, $|\mathcal{D}(\mathcal{F}_t)| = \nu(\mathcal{F}_t)$.
    \end{claim}
    \begin{proof}
        By definition, there is a matching from $\F_t$ to $\D(\F_t) \subseteq \partial \F_t$ of size $|\D(\F_t)|$, and hence,  $\nu(\F_t) \geq |\D(\F_t)|$. We now focus on the opposite inequality.

        Suppose, towards a contradiction, that there is a $t$ for which $|\mathcal{D}(\mathcal{F}_t)| < \nu(\mathcal{F}_t)$. Let $M$ be a matching between $\F_t$ and $\partial \F_t$ of size $\nu(\mathcal{F}_t)$, and let $M'$ be the matching between $\F_t$ and $\C(\F_t)$ corresponding to the inclusions in the chains (i.e. $X$ is matched to $Y$ if $X$ and $Y$ are in the same chain). Consider the multigraph where the vertices are $\binom{[n]}{t} \cup \binom{[n]}{t-1}$ and the edge set is $M \cup M'$. The non-empty components of this graph are paths and even cycles which alternate between edges $M$ and $M'$ (with no multiedges), and multiedges which have one edge from $M$ and one edge from $M'$. Since $|M| > |M'|$ there must be some component $P$ which is a path that starts and ends with edges from $M$. We will reroute some of the chains so that they use the edges from $M$ instead of the edges from $M'$, increasing the size of $\D(\F_t)$.

        If a chain $C^a$ is not incident with an edge in this path, let $D^a = C^a$ (i.e. the chain is unchanged). One end of $P$ must be in layer $t$ and one end in layer $t-1$, and we order the edges starting from the end in layer $t$. If $e \in M$ is not the last edge in the path, then it connects a set $X \in C^a$ of size $t$ to a set $Y \in C^b$ of size $t-1$, and we replace $C^a$ by \[D^a= \left(C^{a}\cap \binom{[n]}{\geq t}\right)\cup\left( C^b \cap \binom{[n]}{\leq t-1}\right).\]

        If $e \in M$ is the last edge in the path, there are two cases. The edge may connect a set $X \in C^a$ of size $t$ to a set $Y$ of size $t-1$ which is not in any other chain, in which case we set $D^a = \left(C^{a}\cap \binom{[n]}{\geq t}\right) \cup \{Y\}$. Then $D^1, \dots, D^{k-1}$ gives a decomposition of $\mathcal{F} \cup\{Y\}$ into $k-1$ chains and this contradicts the assumption that $\mathcal{F}$ is $k$-antichain saturated. The other case is where the edge connects a set $X \in C^a$ of size $t$ to a set $Y \in C^b$ of size $t-1$. Since there is no edge in $M'$ incident to $Y$, it must be the largest set in $C^b$. In this case, we set $D^a = \left(C^{a}\cap \binom{[n]}{\geq t}\right) \cup C^b$. The $k-2$ chains $D^1,\dots,D^{b-1},D^{b+1},\dots,D^{k-1}$ now cover all the elements of $\F$ and we may still define the chain $D^b$ freely. We can choose any set $A$ which is not in $\mathcal{F}$ and set $D^{b}=\{A\}$. Then $D^1, \dots D^{k-1}$ is a chain decomposition of $\F \cup \{A\}$ into $k-1$ chains, a contradiction.
    \end{proof}
    \cref{lem:symchain} shows that, for all $t > \floor{n/2}$ there is a matching between $\F_t$ and $\partial \F_t$ of size $|\F_t|$, which implies $\nu(\F_t)= |\F_t|$. Using the claim above, every chain with a set in layer $t$ must have a set in layer $t-1$ for all $t>\floor{n/2}$.
    The set system $\overline{\mathcal{F}}=\{[n]\setminus F:F\in \F\}$ is also $k$-antichain saturated. Applying \cref{cl:down_nu} to $\overline{\F}$, we find that every chain with a set of size $s < \ceil{n/2}$ must have a set of size $s+1$ as well. Putting these together gives the following claim.
    \begin{claim}
        \label{cl:middle_layer}
        For all $i\in [k-1]$, $C^i$ contains an element from layer $\lfloor n/2\rfloor$.
    \end{claim}
    An immediate consequence of \cref{cl:down_nu} is that $|\F_{t-1}|\geq \nu(\F_t)$. Together with Lemma \ref{lem:colex_best}, this shows
    \begin{equation}
        \label{eq:push_down}
        |\F_{t-1}|\geq \nu(\F_t)\geq \nu(\C),
    \end{equation}
    where $\C$ is an initial segment of colex on $\binom{[n]}t$ of size $|\F_t|$. We already have $|\F_{\floor{n/2}}| = k-1$ and we want this for $\F_t$ down to $t = \floor{\ell/2}$. From (\ref{eq:push_down}),  we can push this downwards at least when $\nu(\C) = |\C|$, and the following claim shows that this holds for all $t >  \floor{\ell/2}$.

    \begin{claim}
        \label{cl:nu_for_colex}
        For $t> \lfloor \ell/2\rfloor $, an initial segment of colex $\C$ on layer $t$ of size at most $k-1$ has $\nu(\C) = |\C|$, and so $|\F_{t-1}| \geq |\F_{t}|$.
    \end{claim}

    \begin{proof}
        Let $\ell^*$ be the largest element in any set in $\mathcal{C}$ i.e. $\ell^* = \max (\bigcup_{A \in \mathcal{C}} A)$. If $t > \floor{\ell^*/2}$, then applying Lemma \ref{lem:symchain} to $[\ell^*]$ shows that there is a matching from $|\C|$ to layer $t - 1$ of $[\ell^*]$ of size $|\C|$, and we are done. Suppose instead that $t \leq \floor{\ell^*/2}$. Since $\C$ is an initial segment of colex, it must contain all subsets of $[\ell^* - 1]$ of size $t$ as well as a set containing $\ell^*$, but this means $\C$ contains too many sets. Indeed,
        \[1 + \binom{\ell^* - 1}{t} \geq 1 + \binom{2t - 1}{t} \geq 1 + \binom{\ell}{\floor{\ell/2}} \geq  k.\qedhere\]
    \end{proof}
    Combined with \cref{cl:middle_layer}, we find that layers $\lfloor \ell/2\rfloor$ up to $\lfloor n/2\rfloor$ all contain $k-1$ elements of $\F$.

    For $t< \lfloor \ell/2\rfloor$, if $|\F_{t+1}|\geq c_{t+1}$ then (\ref{eq:push_down}) shows that
    \[
        |\F_t|\geq \nu(\C(|\F_{t+1}|,t+1))\geq \nu(\C(c_{t+1},t+1))= c_{t}.
    \]
    Hence by induction, we find that
    \[
        \sum_{t=0}^{\lfloor n/2\rfloor}|\F_t|\geq (\lfloor n/2\rfloor-\lfloor \ell/2\rfloor)(k-1) + \sum_{t=0}^{\lfloor\ell/2 \rfloor} c_t. \qedhere
    \]
\end{proof}
The lower bound in \cref{cor:nicecase} follows from the above using the simple observation that $\nu\left( \binom{[m]}{r}\right) = \binom{m}{r-1}$ provided $r \leq \ceil{m/2}$.

When $k-1=\binom{\ell}{\lfloor \ell/2\rfloor}$, we believe that all minimal $k$-antichain saturated set systems have a similar shape: layer $\lfloor \ell/2\rfloor$ is the lowest layer with $k-1$ elements and induces an isomorphic copy of colex, layer $n-\lfloor \ell/2\rfloor$ is the highest layer with $k-1$ elements and contains the complements of an isomorphic copy of an initial segment of colex, and the elements in between these two layers can be covered by $k-1$ skipless chains.

\section{Upper bound constructions}
\label{sec:upperbound}
We first describe the known upper bound construction in the case where $k -1$ is a central binomial coefficient. Combining this construction with the lower bound above gives \cref{cor:nicecase}. We then give the upper bound construction for \cref{thm:main} which works for all values of $k$, but requires a slightly larger value of $n$. Ignoring the minor changes we make to allow for smaller $n$, the first construction is a special case of the second construction.

\subsection{The upper bound construction of \cref{cor:nicecase}}
\label{sec:upper_cor}
Let $\ell,k,n$ be integers such that $\binom{\ell}{\lfloor \ell/2\rfloor}=k-1$ and
$n\geq \ell+1$. For the sake of completeness, we give a precise description of the construction from \cite{ferrara2017saturation}, which shows
\begin{equation}
    \label{eq:cor_sat_upper}
    \sat(n,k)\leq 2 \sum_{j=0}^{\lfloor \ell/2\rfloor} \binom{\ell}j + (k-1)(n-1-2\lfloor \ell/2\rfloor).
\end{equation}
We define a set system $\F \subseteq 2^{[n]}$ that is $k$-antichain saturated.

For $t \leq \lfloor \ell/2 \rfloor$, the sets of size $t$ in $\mathcal{F}$ are exactly the subsets of $[\ell]$ of size $t$, and for $t\geq n-\lfloor \ell/2\rfloor$, we add to $\F$ all subsets $X\subseteq [n]$ of size $t$ for which $[n]\setminus X$ is a subset of $[\ell]$. There are $k-1$ sets $\F$ of size $\floor{\ell/2}$ and $n - \floor{\ell/2}$, and we will join these up using \cref{thm:gener}.

For $\ell$ odd, we first fix a matching $M$ between $\binom{[\ell]}{\lfloor \ell/2\rfloor }$ and $\binom{[\ell]}{\lceil \ell/2\rceil }$, which exists by Lemma \ref{lem:symchain}. When $\ell$ is even, we let $M$ be the identity. We denote by $M(X)$ the element matched to $X$ by $M$. Let $f: \mathcal{F}_{\floor{\ell/2}} \to \mathcal{F}_{n - \floor{\ell/2}}$ be given by
\[
    f(X) =  M(X)\cup [\ell+1,n],
\]
and note that $X\subseteq f(X)$ for all $X \in \mathcal{F}_{\floor{\ell/2}}$.
To complete the family $\F$, we take any set of $k-1$ disjoint skipless chains between $\mathcal{F}_{\floor{\ell/2}}$ and $\mathcal{F}_{n - \floor{\ell/2}}$, which we know exist by Theorem \ref{thm:gener} (or the result of Lehman and Ron).

To see that $\F$ has no antichain of size $k$, we note that it allows a decomposition into $k-1$ chains. Indeed, we may extend the previously obtained $k-1$ chains between layers $\lfloor \ell/2\rfloor$ and $n-\lfloor \ell/2\rfloor$, using any chain decomposition of $[\ell]$ restricted to the lowest $\lfloor \ell/2\rfloor$ layers. We can similarly extend the chains to the layers $n-\lfloor \ell/2\rfloor + 1,\dots,n$.

To see that $\F$ is saturated, note that we clearly cannot add any subset of size $t\in [\lfloor \ell/2\rfloor,n-\lfloor \ell/2\rfloor]$ since $\F$ already contains $k-1$ subsets of size $t$. For $t<\lfloor \ell/2\rfloor$, any subset of size $t$ that is not yet in $\F$ must contain some element $i>\ell$ and is therefore incomparable to the $k-1$ elements of $\F\cap \binom{[n]}{\lfloor \ell/2\rfloor}$. A similar argument holds for $t>n-\lfloor \ell/2\rfloor$.

By counting the number of sets in each layer, we find that
\[|\mathcal{F}| =  2 \sum_{j=0}^{\lfloor \ell/2\rfloor} \binom{\ell}j + (k-1)(n-1-2\lfloor \ell/2\rfloor),\]
as required.

\subsection{Upper bound for Theorem \ref{thm:main}}
In this subsection, we prove the following lemma, which is the upper bound of \cref{thm:main}. Recall that for a given $k \geq 1$, we let $\ell$ be the smallest integer $j$ such that $\binom{j}{\floor{j/2}} \geq k - 1$, and we recursively define $c_{0},c_1, \dots, c_{\floor{\ell/2}}$ as follows.
Let $c_{\floor{\ell/2}} = k-1$ and, for $0\leq t<\floor{\ell/2}$, let $c_t = \nu\left(\C(c_{t+1},t+1)\right)$. We will show that there is a $k$-antichain saturated set system $\F$ where $\F_t$ and $\F_{n-t}$ contain $c_t$ sets for all $t \leq \floor{\ell/2}$, which gives the following result.
\begin{lemma}
    \label{lem:upper_main}
    Using the notation above, \[
        \sat(n,k) \leq  2 \sum_{t=0}^{\floor{\ell/2}} c_t+(k-1)(n - 1 - 2\floor{\ell/2})
    \]
    provided $n \geq 2\ell  + 1$.
\end{lemma}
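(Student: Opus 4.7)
The plan is to construct explicitly a $k$-antichain saturated family $\F\subseteq 2^{[n]}$ of the advertised size, and then verify both that $\F$ contains no $k$-antichain and that adding any set to $\F$ creates one.

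\textbf{Construction.} The approach will be to use the $r$-cascade expansion of $k-1$ (see \cref{sec:cascade}), where $r=\lfloor \ell/2\rfloor$: write $k-1 = \binom{a_r}{r}+\binom{a_{r-1}}{r-1}+\cdots+\binom{a_s}{s}$. I would set $\F_r = \C(k-1,r)$, whose support lies in $[\ell]$, and then iteratively, for $t = r-1, r-2, \ldots, 0$, define $\F_t$ as the image of $\F_{t+1}$ under the explicit maximum matching to its shadow constructed in the proof of \cref{lem:colex-nu}; this gives $|\F_t| = c_t$. By symmetry set $\F_{n-t}=\{[n]\setminus X : X\in \F_t\}$ for $0\leq t\leq r$. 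For the middle layers, fix a bijection $f\colon\F_r\to\F_{n-r}$ with $X\subseteq f(X)$ (which exists because $n\geq 2\ell+1$ leaves enough room to pair each $X\in\F_r$ with a superset), yielding a cover of $\F_r\cup\F_{n-r}$ by $k-1$ two-element chains; then apply \cref{thm:gener} to extract $k-1$ disjoint skipless chains covering these, declaring the sets they visit in layers $r+1,\ldots,n-r-1$ to be the middle portion of $\F$, which gives exactly $k-1$ sets per middle layer.

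\textbf{No $k$-antichain, and counting.} Stitching together the upward matchings from layer $0$ up to layer $r$, the middle skipless chains, and the symmetric downward matchings from layer $n-r$ to layer $n$ gives a decomposition of $\F$ into $k-1$ chains, so Dilworth's theorem rules out any antichain of size $k$. Counting layer by layer gives $|\F| = 2\sum_{t=0}^{r} c_t + (k-1)(n-1-2r)$ as required.

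\textbf{Saturation.} When $X\notin\F$ with $r\leq |X|\leq n-r$, the layer $\F_{|X|}$ already contains $k-1$ sets, so $\{X\}\cup\F_{|X|}$ is an antichain of size $k$. The case $|X|<r$ (and symmetrically $|X|>n-r$) is the technical heart, which I would handle by induction on the cascade: writing $k-1 = \binom{a_r}{r} + m'$, the family $\F_r$ splits as $\binom{[a_r]}{r}\sqcup\{A\cup\{a_r+1\}: A\in \C(m',r-1)\}$, a full cube layer together with a shifted copy of a smaller instance of the construction. By the structure of the matching in \cref{lem:colex-nu}, this cascade decomposition propagates down to every $\F_t$. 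If $X$ contains an element outside $[\ell]$ then it is incomparable with every set in $\F_r$ (whose support is contained in $[\ell]$), and $\{X\}\cup \F_r$ is a $k$-antichain. Otherwise $X\subseteq[\ell]$, and I would split on whether $a_r+1\in X$, each sub-case reducing to the saturation claim for the strictly smaller sub-cascade $m'$ where the inductive hypothesis supplies the remaining incomparable sets.

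The hardest step will be the saturation verification for $|X|<r$: in the central binomial case (\cref{cor:nicecase}) it is enough to know that the support of $\F_r$ equals $[\ell]$, but for general $k$ the cascade decomposition of $\F_r$ must be threaded carefully through the matching of \cref{lem:colex-nu} and aligned with the inductive structure. The hypothesis $n\geq 2\ell+1$ enters both to make the middle skipless-chain construction feasible and to guarantee that a ``fresh'' element outside $[\ell]$ is always available for the saturation argument.
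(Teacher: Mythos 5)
Your construction fixes only the top layer $\F_{\floor{\ell/2}}=\C(k-1,\floor{\ell/2})$; from the next step on the families you push down are no longer initial segments of colex, and this breaks the size count. The ``explicit maximum matching constructed in the proof of \cref{lem:colex-nu}'' is only defined for colex segments, so for $t<\floor{\ell/2}-1$ you are really taking an arbitrary maximum matching from a non-colex family $\F_{t+1}$ of size $c_{t+1}$ to its shadow, and the only general tool available, \cref{lem:colex_best}, gives the inequality in the wrong direction: colex \emph{minimises} $\nu$, so all you know is $|\F_t|=\nu(\F_{t+1})\geq \nu(\C(c_{t+1},t+1))=c_t$, with strict inequality possible. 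The images are not canonical either (they depend on which symmetric-chain matchings are used for the trailing cascade terms with $j>\ceil{a_j/2}$); already for $k=262$ your $\F_4$ is $\binom{[10]}{4}$ together with nine sets $B\cup\{11\}$ for nine \emph{unspecified} $3$-subsets $B\subseteq[6]$, and nothing in your argument bounds $\nu(\F_3)$ above by $c_2$, so the total size need not match the claimed formula. This delicacy is exactly what the paper warns about after \cref{thm:main} (taking each low layer to be colex of size $c_t$ already fails for $k=262$), and it is why the paper's construction does not push colex down by matchings at all: it rewrites $k-1$ using the $\floor{\ell/2}$-\emph{expansion} (not the cascade), in which every term satisfies $r_i\leq\ceil{a_{r_i}/2}$, takes the full shifted downsets $\A_i$ of small cubes covered by symmetric chain decompositions, joins them to their reflections via $f(A)=\{i:n+1-i\notin A\}$ (which makes $A\subseteq f(A)$ automatic, whereas your pairing of $\F_r$ with $\F_{n-r}$ by disjointness also still needs a proof), applies \cref{thm:gener}, and only then verifies $|\F_t|=c_t$ through \cref{cl:down_nu}, \cref{lem:lower_expansion} and \cref{lem:colex-nu}.

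The saturation check for $|X|<\floor{\ell/2}$, which you correctly identify as the heart of the matter, suffers from the same lack of structure. The paper's argument exhibits an explicit $k$-antichain $\bigl(\bigcup_{j\leq i}\D_j\bigr)\cup\{X\}\cup\D$ and the key step --- showing $X$ is not below any $A\in\D_j$ --- uses that every low layer of the construction contains the \emph{complete} shifted cube layers, so that $X\subseteq A$ would force $X\in\A_x\subseteq\F$, contradicting $X\notin\F$. Your layers are matching images of size exactly $c_t$ with no such containment guarantee, so the reduction ``split on whether $a_r+1\in X$ and induct on the cascade'' has nothing to anchor it; moreover the induction would have to deal with the cascade terms with $j>\ceil{a_j/2}$, which behave differently under downward matchings and are precisely what the paper's $r$-expansion is introduced to repackage. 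As written, neither the size count nor the saturation of your family is established.
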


Lemma \ref{lem:colex_best} shows that  $\nu(\B)$ is minimised by taking $\B$ to be an initial segment of colex.
We wish to construct a set system $\F$, such that $\nu(\F_t)=\nu(\C(|\F_t|,t)$ for all $t\leq \lfloor \ell/2\rfloor$, yet $\F$ can be covered by $k-1$ chains.

Suppose that each set in $\C(m,r)$ is in a chain and consider how many continue to the layer below. \cref{lem:colex-nu} shows that the only `savings' $m-\nu(\C(m,r))$ come from the initial sequence where $j \leq \ceil{a_j/2}$, and that we need to continue the chains for the remaining terms to the layer below. This gives us some freedom to change those terms in the $r$-cascade notation of $m$ (see (\ref{eq:initial_exp})), and we will modify the terms at the end so that they are part of the expansion of a later layer. We will do this using a different way of writing of $m$ as a sum of binomial coefficients that we now introduce.

Given $m, r \geq 1$ such that $m \geq \binom{2r-1}{r}$.
Let the \emph{$r$-expansion} of $m$ be
\[
    m = \binom{a_{r_1}}{r_1} + \dotsb + \binom{a_{r_s}}{r_s}
\]
recursively formed as follows. Let $r_1 = r$ and define $a_{r_1}$ as the maximum $j$ such that $\binom{j}{r_1} \leq m$. Note that $a_{r_1}\geq 2r_1-1$.
Set $m' = m - \binom{a_{r_1}}{r_1}$. If $m' = 0$, we are done. Otherwise, let $r'$ be the maximum $j \leq r -1$ such that $\binom{2j-1}{j} \leq m'$ and form the $r$-expansion of $m$ by appending to $\binom{a_{r_1}}{r_1}$ the $r'$-expansion of $m'$. It is easy to see that this is well-defined and must terminate.

As an example, let us consider the $5$-expansion of $m = 1011$. Since $\binom{12}{5} < 1011 < \binom{13}{5}$, we take $a_{r_1} = 12$ (and $r_1 = 5$).  This means $m' = 219$, and the largest integer $j \leq 4$ such that $\binom{2j -1}{j} \geq m'$ is $ j = 4$ (we also have $\binom{9}{5} \leq m'$, but this is not allowed). We therefore append the $4$-expansion of 219. Calculating this recursively in the same manner, we see $a_{r_2} = 10$ (and $r_2 = 4$), which leaves a remainder of $9$. Since $\binom{4}{2} \leq 9 < \binom{5}{3}$, we append the $2$-expansion of $9$, which is $\binom{4}{2} + \binom{3}{1}$. This gives the $5$ expansion of 1011 as
\[1011 = \binom{12}{5} + \binom{10}{4} + \binom{4}{2} + \binom{3}{1}.\]

The following lemma gives some properties of the $r$-expansion of an integer $m$.
\begin{lemma}
    \label{lem:r-expansion}
    Let $m,r \geq 1$ be such that $m \geq \binom{2r-1}{r}$. Let the  $r$-expansion of $m$ be \[m = \binom{a_{r_1}}{r_1} + \dotsb + \binom{a_{r_s}}{r_s}.\]
    Then the following statements hold:
    \begin{enumerate}
        \item $r = r_1 > \dotsb > r_s \geq 1$;
        \item $a_{r_1} > \dots > a_{r_s} \geq 1$;
        \item for all $i \in [s]$, we have $r_i \leq \ceil{a_{r_i}/2}$.
    \end{enumerate}
\end{lemma}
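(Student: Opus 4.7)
My plan is to induct on the length $s$ of the expansion, establishing each of the three properties directly from the recursive construction. Write $m_i$ for the remainder entering step $i$, so $m_1=m$ and $m_{i+1}=m_i-\binom{a_{r_i}}{r_i}$; the recursion terminates because $(m_i)$ is a strictly decreasing sequence of non-negative integers, and its implicit well-definedness (some admissible $j$ always exists at each step with $m_i>0$) follows from $\binom{1}{1}=1\leq m_i$.

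Property 1 is immediate from the construction, which forces $r_{i+1}\leq r_i-1$. Property 3 reduces to the stronger inequality $a_{r_i}\geq 2r_i-1$: by the choice of $r_i$ we have $\binom{2r_i-1}{r_i}\leq m_i$ (the case $i=1$ being the hypothesis $m\geq \binom{2r-1}{r}$), and since $a_{r_i}$ is defined as the largest $j$ with $\binom{j}{r_i}\leq m_i$, this forces $a_{r_i}\geq 2r_i-1$. From $a_{r_i}\geq 2r_i-1$ we get $\lceil a_{r_i}/2\rceil\geq r_i$, which is precisely property 3.

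The real content is property 2, namely $a_{r_{i+1}}<a_{r_i}$, equivalently $\binom{a_{r_i}}{r_{i+1}}>m_{i+1}$. The maximality of $a_{r_i}$ always gives
\[
    m_{i+1}<\binom{a_{r_i}+1}{r_i}-\binom{a_{r_i}}{r_i}=\binom{a_{r_i}}{r_i-1}.
\]
If $r_{i+1}=r_i-1$, this is already what we need. Otherwise $r_{i+1}+1\leq r_i-1$ was a rejected candidate in the choice of $r_{i+1}$, so $\binom{2r_{i+1}+1}{r_{i+1}+1}>m_{i+1}$. Combining the symmetry $\binom{2r_{i+1}+1}{r_{i+1}+1}=\binom{2r_{i+1}+1}{r_{i+1}}$ with $a_{r_i}\geq 2r_i-1\geq 2r_{i+1}+1$ (from property 3 applied at step $i$) and the monotonicity of $\binom{\cdot}{r_{i+1}}$ in the top index yields
\[
    \binom{a_{r_i}}{r_{i+1}}\geq \binom{2r_{i+1}+1}{r_{i+1}}>m_{i+1},
\]
as required. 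The endpoint $a_{r_s}\geq 1$ is automatic since $m_s\geq 1$ forces $a_{r_s}\geq r_s\geq 1$.

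The step I expect to be the main obstacle is the second case of property 2, since it is the one place in the argument where three ingredients need to be chained together: the lower bound $a_{r_i}\geq 2r_i-1$ supplied by property 3, the symmetry of central binomial coefficients to flip the lower index from $r_{i+1}+1$ to $r_{i+1}$, and the upper bound on $m_{i+1}$ coming from the maximality of $r_{i+1}$. Everything else is straightforward bookkeeping directly from the definitions.
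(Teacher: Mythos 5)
Your proof is correct and follows essentially the same route as the paper: induction along the expansion, with the key point $a_{r_{i+1}}<a_{r_i}$ handled by the same case split ($r_{i+1}=r_i-1$ via the Pascal-identity bound from the maximality of $a_{r_i}$, and $r_{i+1}\leq r_i-2$ via the maximality of $r_{i+1}$, symmetry of $\binom{2r_{i+1}+1}{\cdot}$, and $a_{r_i}\geq 2r_i-1$). The only difference is that you argue contrapositively step by step where the paper runs a structural induction on the tail and derives contradictions, which is purely cosmetic.
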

\begin{proof}
    We prove this by induction on $s$. There is nothing to prove for the base case $s = 1$. Suppose that $s \geq 2$. Using $m'$ and $r'$ as in the definition of the $r$-expansion and applying the induction hypothesis to the $r'$-expansion of $m'$ (which has $s - 1$ terms), the following must hold:
    \begin{itemize}
        \item $r' = r_2 > \dotsb > r_s \geq 1$;
        \item $a_{r_2} > \dots > a_{r_s} \geq 1$;
        \item for all $i \in [2,s]$, we have $r_i \leq \ceil{a_{r_i}/2}$.
    \end{itemize}
    By definition we have $r = r_1 > r_2$ and it follows from $m \geq \binom{2r-1}{r}$, that $r_1 \leq \ceil{a_{r_1}/2}$. Hence, we only need to check that $a_{r_1} > a_{r_2}$.

    Suppose first that $r' = r - 1$. If $a_{r_2} \geq a_{r_1}$, we have
    \[ m \geq \binom{a_{r_1}}{r_1} + \binom{a_{r_1}}{r_1 -1} = \binom{a_{r_1} + 1}{r_1}\]
    which contradicts the definition of $a_{r_1}$. If $r' \leq r - 2$, then $a_{r_2} \geq a_{r_1} \geq 2r' +3$, and so
    \[\binom{2r' + 3}{r'} \geq \binom{2r' + 1}{r'} = \binom{2(r'+1) - 1}{r' + 1}.\] This contradicts our choice of $r'$.
\end{proof}
By the lemma, $r_1\geq r_2-1\geq r_3-2\geq  \dots \geq r_s -(s-1)$.
Using the observation that $\binom{m+1}{r} = \sum_{i = 0}^r \binom{m - i}{r-i}$, we obtain the following simple lemma.

\begin{lemma}
    \label{lem:lower_expansion}
    Suppose that $m, r \geq 1$ satisfy $m \geq \binom{2r-1}{r}$. Let the $r$-expansion of $m$ be \[
        m = \binom{a_{r_1}}{r_1} + \dotsb + \binom{a_{r_s}}{r_s}.\]
    Fix $t \in[r]$ and let $i$ be the unique integer such that $r_i + (i-1) \geq t > r_{i+1} + i$ (where we take $r_{s+1} = - s -  1$). Suppose that $i \leq s - 1$ and define $m'$ by
    \[
        m' = \binom{a_{r_1}}{t} + \dots + \binom{a_{r_{i}}}{t - (i-1)} + \binom{a_{r_{i+1}}}{r_{i+1}} + \dotsb + \binom{a_{r_s}}{r_s}.\]
    Then the expression above gives the $t$-expansion of $m'$.
\end{lemma}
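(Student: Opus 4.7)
The plan is to verify directly that the proposed expression satisfies the defining properties of the $t$-expansion of $m'$: the three structural conditions of Lemma \ref{lem:r-expansion} and the greedy maximality at each recursive step. Write the expression as $\sum_{j=1}^s \binom{b_j}{s_j}$ with $b_j = a_{r_j}$ throughout, $s_j = t-(j-1)$ for $j \leq i$, and $s_j = r_j$ for $j > i$; let $m^{(j)}$ denote the corresponding remainder $\sum_{q \geq j} \binom{b_q}{s_q}$.

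\emph{Structural conditions and bottom maximality.} Strict decrease of the $b_j$ is inherited. The $s_j$ strictly decrease within each block, and the crossing $s_i > s_{i+1}$ reads $t-(i-1) > r_{i+1}$, one of the two defining inequalities of $i$. The bound $s_j \leq \lceil b_j/2 \rceil$ is inherited for $j > i$; for $j \leq i$ it chains as $t-(j-1) \leq r_j \leq \lceil a_{r_j}/2 \rceil$, where the first step combines the strict decrease of the original $r$'s with $r_i \geq t-(i-1)$ (the other defining inequality of $i$). Well-definedness $m' \geq \binom{2t-1}{t}$ follows from $a_{r_1} \geq 2r-1 \geq 2t-1$. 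Next I verify the bottom maximality at each recursive step: inherited for $j > i$; for $j \leq i-1$, the new bottom $s_{j+1} = t-j$ is trivially maximal in $[1, s_j - 1] = [1, t-j]$, and the binomial condition follows from $a_{r_{j+1}} \geq 2(t-j)-1$; for the transition $j = i$, the shrunken candidate interval $[1, t-i]$ is contained in the original $[1, r_i-1]$ (since $r_i \geq t-(i-1)$), so the original maximality still picks out $r_{i+1}$.

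\emph{Top maximality: the main obstacle.} For $j \leq i$ the task is to show $\binom{a_{r_j}+1}{t-(j-1)} > m^{(j)}$, equivalently (by Pascal) $\binom{a_{r_j}}{t-j} > m^{(j+1)}$. This is where the key identity $\binom{m+1}{r} = \sum_{k=0}^r \binom{m-k}{r-k}$ from the paragraph preceding the lemma is essential: it expands $\binom{a_{r_j}}{t-j} = \sum_{k=0}^{t-j} \binom{a_{r_j}-1-k}{t-j-k}$, and I would match the $q$-th term of $m^{(j+1)}$ (for $q = j+1, \dots, s$) to the $k = q-j-1$ identity term $\binom{a_{r_j}-(q-j)}{t-q+1}$ and compare termwise. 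The domination $\binom{a_{r_q}}{s_q} \leq \binom{a_{r_j}-(q-j)}{t-q+1}$ uses $a_{r_q} \leq a_{r_j}-(q-j)$ and $s_q \leq t-q+1$ (with equality for $q \leq i$ and strictness $s_q = r_q \leq t-q$ for $q > i$, derived from $r_q \leq r_{i+1} - (q-i-1)$ and $r_{i+1} \leq t-i-1$). Monotonicity in the bottom requires both $s_q$ and $t-q+1$ to lie below the midpoint of $a_{r_j}-(q-j)$, which reduces by arithmetic to $a_{r_j} \geq 2t-q-j+1$, valid since $a_{r_j} \geq 2r_j-1 \geq 2t-2j+1$ and $q \geq j+1$. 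Strict inequality in the final bound comes either from a strict bottom comparison for some $q > i$ (always available since $i \leq s-1$) or from unused positive identity terms when $s - j - 1 < t - j$. For $j > i$ the analogous top maximality is inherited from the original $r$-expansion, since the relevant remainders coincide.
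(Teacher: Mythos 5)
Your proposal is correct and in substance follows the paper's own route: both arguments come down to verifying the greedy maximality conditions of the expansion, with the decisive estimate being the same telescoping bound (the paper shows $m'' < \binom{a_{r_j}+1}{t-(j-1)}$ exactly as you show $m^{(j+1)} < \binom{a_{r_j}}{t-j}$), the paper merely packaging this as a first-point-of-difference contradiction with the true $t$-expansion rather than a direct check of the recursive definition. One small remark: your strictness dichotomy is settled by observing that $r_q \le t-q$ for $q>i$ forces $s \le t-1$, so the ``unused identity term'' case always applies.
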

\begin{proof}
    Suppose that the $t$-expansion of $m'$ is not as claimed, but that instead the $t$-expansion of $m'$ is
    \[m' = \binom{a'_{r'_1}}{r'_1} + \dotsb + \binom{a'_{r'_{p}}}{r'_{p}}.\]
    Let $j$ be the first point at which this expansion differs from the claimed expansion. If $j \geq  i + 1$ then
    \[\binom{a_{r_{i+1}}}{r_{i+1}} + \dotsb + \binom{a_{r_s}}{r_s} = \binom{a'_{r'_{i+1}}}{r'_{i+1}} + \dotsb + \binom{a'_{r'_p}}{r'_p},\]
    but looking at the definition of the $r$-expansion, both of these come from exactly the same recursion.

    Instead, we must have $j \leq i$. We first prove $r_{j}'=t-(j-1)$. Note that $r_1'=t$ so $j>1$ and $r'_{j-1} =t - (j - 2)$. This implies $r'_{j}\leq t-(j-1)$.
    From the definition of $i$, we find that $r_j+(j-1)\geq r_i+(i-1)\geq t$ and so $r_j\geq t-(j-1)$.
    Let $m'' = m' - \binom{a_{r_1}}{t} - \dotsb - \binom{a_{r_{j-1}}}{t - (j-2)}$.    Then $m'' \geq  \binom{a_{r_j}}{t-j+1}\geq \binom{2(t - j) + 1}{t -j + 1}$ and so $r'_j \geq t - (j-1)$.

    Since $r_j'=t-(j-1)$, it must be the case that $a_{r_j}\neq a_{r_j'}'$. However,
    \begin{align*}
        m'' & = \binom{a_{r_{j}}}{t - (j - 1)} + \dotsb + \binom{a_{r_{i}}}{t - (i-1)} + \binom{a_{r_{i+1}}}{r_{i+1}} + \dotsb + \binom{a_{r_s}}{r_s} \\
            & \leq \binom{a_{r_{j}}}{t - (j - 1)} + \binom{a_{r_{j}} - 1}{t - j} + \dotsb + \binom{a_{r_{j}} - (t-j)}{1}                              \\
            & < 1 + \binom{a_{r_{j}}}{t - (j - 1)} + \binom{a_{r_{j}} - 1}{t - j} + \dotsb + \binom{a_{r_{j}} - (t-j)}{1}                             \\
            & = \binom{a_{r_j} + 1}{t-(j-1)}.
    \end{align*}
    That is
    \[\binom{a_{r_{j}}}{t - (j - 1)} \leq m '' < \binom{a_{r_j} + 1}{t-(j-1)},\]
    and $a_{r_j'}'=a_{r_j}$ by definition.
\end{proof}

We are now ready to prove \cref{lem:upper_main}.

\begin{proof}[Proof of Lemma \ref{lem:upper_main}]
    Let $\ell$ be the smallest integer $j$ such that $\binom{j}{\floor{j/2}} \geq k - 1$. We have
    \[k - 1 \geq \binom{\ell - 1}{\floor{(\ell - 1)/2}} = \binom{\ell -1}{\ceil{(\ell -1)/2}} = \binom{\ell -1}{\floor{\ell /2}} \geq \binom{2 \floor{\ell/2} - 1}{\floor{\ell /2}}.\]
    Let the $\floor{\ell/2}$-expansion of $k-1$ be
    \[ k - 1 = \binom{a_{r_1}}{r_1} + \dotsb + \binom{a_{r_s}}{r_s}\]
    where $\floor{\ell/2} = r_1 > \dotsb > r_s \geq 1$, $a_{r_1} > \dots > a_{r_s} > 0$ and $r_i \leq \ceil{a_{r_i}/2}$ for all $i \in [s]$. These facts are guaranteed by Lemma \ref{lem:r-expansion}. By our assumption that $\binom{\ell}{\floor{\ell/2}} \geq k -1$, we have $a_{r_1} \leq \ell -1$.

    We now define our construction by processing each of the terms in this expansion.
    Initialise $\I$ as an empty set of chains. For each $i \in [s]$, let $\A_i$ be the set system consisting of sets of the form \[A = X \cup \{a_{r_1} + 1, a_{r_2} + 1, \dots, a_{r_{i-1}} + 1\}\] where $X$ is a subset of $[a_{r_i}]$ of size at most $r_i$. Note that the largest element in any of these sets is either $a_{r_1}$ or $a_{r_1} + 1$, and hence all sets are contained in $[\ell]$.

    Since $r_i \leq \ceil{a_{r_i}/2}$, we can cover $\A_i$ with $\binom{a_{r_i}}{r_i}$ disjoint chains, and we add these chains to our collection of chains $\I$. Indeed, we may start with the chains from a symmetric chain decomposition of $2^{[a_{r_i}]}$ and add the elements $a_{r_1} + 1, a_{r_2} + 1, \dots, a_{r_{i-1}} + 1$ to every set. Discard any sets which are not in $\A_i$ and remove any empty chains.

    Define $f: 2^{[n]} \to 2^{[n]}$ by $f(A) = \{i \in [n] : n + 1 - i \not \in A\}$. Form a set of chains $\I'$ be replacing each chain $C \in \I$ by $\{A \cup f(A): A \in C\}$. Since we have assumed that $n \geq 2\ell  + 1$, we have that $A\subseteq f(A)$ for any set $A\subseteq [\ell]$, and these are indeed chains.
    The chains in $\I'$ are also disjoint and we can apply Theorem \ref{thm:gener} to find disjoint chains $D^{1}, \dots, D^{k-1}$ which cover the same sets and are skipless. We take $\F$ to be the union of $D^1, \dots, D^{k-1}$. See \cref{fig:constr} for a depiction of a set system constructed as above.
    \begin{figure}
        \centering
        \tikzstyle{vertex}=[circle, draw, fill=black!50,
                        inner sep=0pt, minimum width=4pt]

\begin{tikzpicture}[thick,scale = 0.45]
      \foreach \pos in {0,1,...,4} {
      \node[vertex] at (\pos,0) {};
      \node[vertex] at (\pos,4) {};
      };
      \draw[draw=black] (-0.5,-0.35) rectangle ++(5,0.7);
      \draw (-0.5,-0.35) -- ++(0,-6) -- ++(5,6);
      \draw[draw=black] (-0.5,3.65) rectangle ++(5,0.7);
      \draw (-0.5,3.65+0.7) -- ++(0,6) -- ++(5,-6);
      
      \foreach \pos in {0,1,2} {
      \node[vertex] at (\pos+6,-2) {};
      \node[vertex] at (\pos+6,6) {};
      };
      \draw[draw=black] (5.5,-2.35) rectangle ++(3,0.7);
      \draw (5.5,-2.35) -- ++(0,-3) -- ++(3,3);
      \draw[draw=black] (5.5,5.65) rectangle ++(3,0.7);
      \draw (5.5,5.65+0.7) -- ++(0,3) -- ++(3,-3);

      \foreach \pos in {0,1} {
      \node[vertex] at (\pos+10,-4) {};
      \node[vertex] at (\pos+10,8) {};
      }
      \draw[draw=black] (9.5,-4.35) rectangle ++(2,0.7);
      \draw (9.5,-4.35) -- ++(0,-0.5) -- ++(2,0.5);
      \draw[draw=black] (9.5,7.65) rectangle ++(2,0.7);
      \draw (9.5,7.65+0.7) -- ++(0,0.5) -- ++(2,-0.5);
      
      \draw (-0.75,-7) -- ++(0,-0.5) -- ++(5.5,0) -- ++(0,0.5);
      \draw (5,-7) -- ++(0,-0.5) -- ++(3.75,0) -- ++(0,0.5);
      \draw (9,-7) -- ++(0,-0.5) -- ++(2.75,0) -- ++(0,0.5);
      
      \node at (2.25,-8) {$\mathcal{A}_1$};
      \node at (6.90,-8) {$\mathcal{A}_2$};
      \node at (10.5,-8) {$\mathcal{A}_3$};
      
      \node  at (13.5,0) (r1) {$r_1$};
      \draw [dashed] (4.5,0) -- (r1);
     
      \node at (14.1,-2) (r2) {$r_2 + 1$};
      \draw [dashed] (8.5,-2) -- (r2);
      
      \node at (14.1,-4) (r3) {$r_3 + 2$};
      \draw [dashed] (11.5,-4) -- (r3);
      
      \node at (14.4,4) (nr1) {$n - r_1$};
      \draw [dashed] (4.5,4) -- (nr1);
      
      \node at (15,6) (nr2) {$n - r_2 - 1$};
      \draw [dashed] (8.5,6) -- (nr2);
      
      \node at (15,8) (nr3) {$n - r_3 - 2$};
      \draw [dashed] (11.5,8) -- (nr3);
      
      \draw[draw=black] (-0.5,-0.35+0.7) rectangle ++(5,4);
      \draw[draw=black] (5.5,-2.35+0.7) rectangle ++(3,8);
      \draw[draw=black] (9.5,-4.35+0.7) rectangle ++(2,12);
\end{tikzpicture}
        \caption{The shape of our upper bound construction is depicted.}
        \label{fig:constr}
    \end{figure}
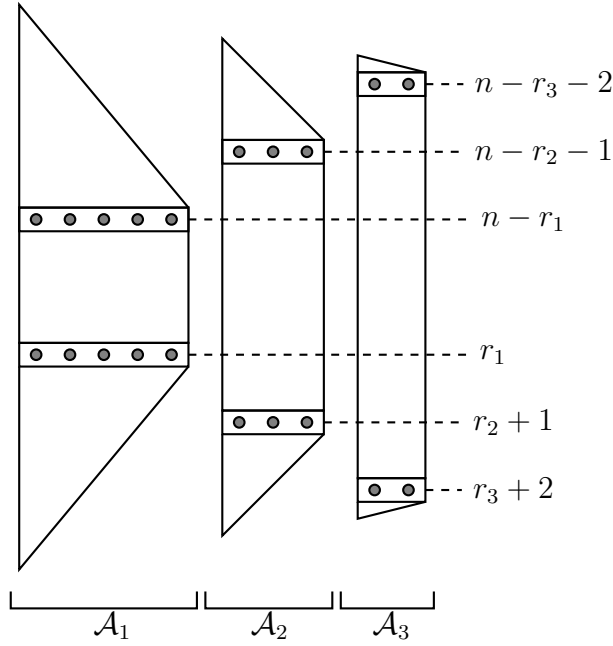

    The set system $\F$ is the union of $k-1$ chains, so cannot contain an antichain of size $k$. We now argue that adding any set $X$ to $\F$ creates an antichain of size $k$. Let $|X| = t$. If  $\floor{\ell/2} \leq t \leq n - \floor{\ell/2}$, then adding $X$ creates an antichain of size $k$ as $\F$ already contains $k - 1$ sets in layer $t$. We can assume that $1\leq t < \floor{\ell/2}$, else we can consider adding the set $f(X) = \{i \in [n] : n + 1 - i \not \in X\}$ instead. Set $r_{s+1} = -s -1$. The sequence $\floor{\ell/2} = r_1, r_2 + 1, \dots, r_{s+1} + s = -1$ is non-increasing so there is a unique $i$ such that $r_i + (i-1) \geq t > r_{i+1} + i$.
    Let $\D_j = \left\{ A \cup \{a_{r_1} + 1, \dots, a_{r_{j-1}} + 1\} : A \in \binom{[a_{r_j}]}{r_j}\right\}$ and let $\D$ be the set of all sets in $\mathcal{F}_t$ which contain $a_{r_1} + 1, \dots, a_{r_{i}} + 1$.

    We claim that the following subset of $\F \cup \{X\}$ is an antichain of size $k$:
    \[
        \left(\bigcup_{j = 1}^i \D_j\right) \cup \{X\} \cup \mathcal{D}.
    \]
    By definition, $|\D_j|=\binom{a_j}{r_j}$ for $j\in [i]$ and $|\D|=\sum_{j=i+1}^s\binom{a_j}{r_j}$ so indeed the collection has size $k$.
    We note that for $A,B\in \left(\bigcup_{j = 1}^i \D_j\right)  \cup \mathcal{D}$ with $|A|<|B|=r_j+(j-1)$, the element $a_{r_j}+1\in A\setminus B$.
    It is immediate that $\D\cup \{X\}$ forms an antichain since all sets have the same size.
    For $j\in [i]$ and $A\in \D_j$, the set $A$ cannot be a subset of $X$ as $|A|=r_j+(j-1)\geq t$.
    We next show that $X$ is not a subset of $A$.
    Suppose towards a contradiction that $X\subseteq A\in \D_j$. Then $a_{r_j}+1\not \in X$.
    Let $x\in [j]$ be the smallest integer such that  $a_{r_x}+1\not\in X$. Then, since $a_{r_y}< a_{r_x}$ for all $y> x$, we find that $X\setminus\{a_{r_1}+1,\dots,a_{r_{x-1}}+1\}\subseteq [a_{r_x}]$ and so
    $X\in \A_x\subseteq \F$, contradicting our choice $X\not\in \F$. This shows the set system is indeed an antichain.

    We have shown that $\F$ is $k$-antichain saturated.
    It remains to check that $\mathcal{F}$ has the claimed size. By Claim \ref{cl:down_nu}, we know that $|\F_{t-1}|= \nu(\F_t)$ for $t \leq  \floor{\ell/2}$ and $|\F_{\floor{\ell/2}}|=k-1=c_{\floor{\ell/2}}$. Hence, $|\F_t| \geq c_t$ for all $t \leq \floor{\ell/2}$. Let $t < \floor{\ell/2}$ be maximal such that $|\F_{t}| > c_{t}$, and define $i$ to be the unique integer such that $r_i + (i-1) \geq t > r_{i+1} + i$ (where we again take $r_{s+1} = -s - 1$). Suppose first that $i \leq s - 1$, so that
    \[
        |\F_t| = \binom{a_{r_1}}{t} + \dots + \binom{a_{r_{i}}}{t - (i-1)} + \binom{a_{r_{i+1}}}{r_{i+1}} + \dotsb + \binom{a_{r_s}}{r_s}
    \]
    is the $t$-expansion of $|\F_t|$ (using \cref{lem:lower_expansion}).
    We may also write $|\F_t|$ in its $t$-cascade notation (see (\ref{eq:initial_exp})) as
    \[ |\F_t| = \binom{b_t}{t} + \binom{b_{t-1}}{t-1} + \dotsb + \binom{b_{s'}}{s'}\]
    where $b_t > \dotsb > b_{s'}$, $s' \geq 1$ and $b_j \geq j$ for all $j \in [s', t]$.
    Note that by the way that both $a_{r_j}$ and $b_j$ are defined, we must have $b_t = a_{r_1}$, $b_{t-1} = a_{r_2}$, and so on until $b_{t-i + 1} = a_{r_i}$. That is,
    \[ |\F_t| = \binom{a_{r_1}}{t} + \dotsb + \binom{a_{r_{i}}}{t-(i-1)} + \binom{b_{t-i}}{t-i}+ \dotsb + \binom{b_{s'}}{s'}.\]
    If $t - i > \ceil{b_{t-i}/2}$, then Lemma \ref{lem:colex-nu} shows that $c_{t-1}$ is given by
    \[c_{t-1} = \binom{a_{r_1}}{t - 1} + \dotsb + \binom{a_{r_i}}{t - i} + \binom{b_{t -i}}{t - i} + \dotsb + \binom{b_{s'}}{s'},\]
    but this is exactly $|\F_{t-1}| = \nu(\F_t)$.
    If $t - i \leq \ceil{b_{t-i}/2}$, then we would take $r_{i+1} = t - i$ in the $t$-expansion of $|\F_t|$, but this contradicts the value of $i$.

    If $i = s$, then $\F_t$ is actually an initial segment of colex. All the elements of $\F_t$ come from the $\A_j$ and no sets are added to layer $t$ when we apply \cref{thm:gener}. If $s' = \min\{t + 1,s\}$, then there are no sets from $\A_j$ when $j > s'$ and when $j \leq s'$, the sets are of the form
    \[X \cup \{a_{r_1} + 1, a_{r_2} + 1, \dots, a_{r_{j-1}} + 1\}\] where $X$ is a subset of $[a_{r_j}]$ of size exactly $t - (j-1)$. This is exactly the initial segment of colex of size $|\F_t|$, and we have $c_{t-1} = |\F_{t-1}|$.
\end{proof}

\section{Open problems}
\label{sec:concl}
\cref{thm:main} gives the exact value of $\sat(n,k)$ for most values of $n$ and $k$, but it leaves a range $\ell \leq n \leq 2\ell$ for which the exact value is not known. For these values of $n$, the upper bound construction works separately on the lower half of the Boolean lattice and on the upper half, but it is not clear that we can join these constructions up.
With a little more care, we believe we can reduce this gap by proving the upper bound for slightly smaller values of $n$, but we do not know how to reduce this all the way down to $\ell$.
\begin{problem}
Is the lower bound of Theorem \ref{thm:main} correct for all $\ell \leq n\leq 2\ell$?
\end{problem}
Several other interesting questions concerning induced poset saturation number remain open, amongst which the question of which asymptotical behaviours are possible. Let $\sat(n,\mathcal{P})$ denote the smallest size of a set system $\mathcal{F}\subseteq 2^{[n]}$ which is (induced) $\mathcal{P}$-saturated. A recent result of \cite{Freschi2022} shows that either $\sat(n,P)=O(1)$ or $\sat(n,\mathcal{P})=2\sqrt{n-2}$. This result has been conjectured to be tight for the diamond \cite{ivandiamond} but it has also been conjectured by \cite{KESZEGH2021105497} that the lower bound can be improved to $\sat(n,\mathcal{P})\geq n+1$.

A natural question that one could ask is whether \cref{thm:gener} extends to other posets. The result of Lehman and Ron has already been generalised to a vast class of posets including geometric lattices by Logan and Shahriari \cite{logan2004new}, and our generalisation may also extend. Note that, following the proof of \cref{thm:gener}, it would suffice to prove an analogue of \cref{lem:size_Q}. An extension of Theorem \ref{thm:gener} to other posets might also make it possible to determine the asympotics of their antichain saturation numbers.

\bibliographystyle{alphaurl}
\bibliography{biblio}

\end{document}